\newtheorem{theorem}{Theorem}[section]
\newtheorem{lemma}[theorem]{Lemma}
\newtheorem{coro}[theorem]{Corollary}
\newcommand{\mS}{\mathcal{S}}
\newcommand{\mA}{\mathcal{A}}
\newcommand{\mB}{\mathcal{B}}
\newcommand{\mE}{\mathcal{E}}
\newcommand{\mR}{\mathcal{R}}
\newcommand{\mP}{\mathcal{P}}
\newcommand{\mD}{\mathcal{D}}
\newcommand{\mK}{\mathcal{K}}
\newcommand{\rN}{\mathbb{R}}
\newcommand{\mT}{\mathcal{T}}
\newcommand{\mO}{\mathcal{O}}
\newcommand{\tT}{\mbox{T}}
\newcommand{\tN}{\mbox{N}}
\newcommand{\nN}{\mathbb{N}}
\newcommand{\uS}{\mathbb{S}}
\newcommand{\wf}{\mbox{WF}}
\newcommand{\eg}{\varepsilon}
\newcommand{\llg}{\lambda}
\newcommand{\sg}{\sigma}
\newcommand{\og}{\omega}
\newcommand{\Og}{\Omega}
\title[Oscillatory integral and spherical Radon transform]{Inverting Spherical Radon Transform by a Closed-form Formula: A Microlocal Analytic Point of View}
\author{Linh V. Nguyen}
\thanks{The research is supported by the NSF grant DMS 1212125}
\address{Department of Mathematics, University of Idaho, Moscow, Idaho 83844, USA}
\email{lnguyen@uidaho.edu}
\begin{document}

\maketitle

\begin{abstract} Let $\mR$ be the restriction of the spherical Radon transform to the set of spheres centered on a hypersurface $\mS$.  We study the inversion of $\mR$ by a closed-form formula. We approach the problem by studying an oscillatory integral, which depends on the observation surface $\mS$ as a parameter. We then derive various microlocal analytic properties of the associated closed-form inversion formula.  
\end{abstract}

\section{Introduction}\label{S:intro}
Let $\mS$ be a smooth hypersuface in $\rN^n$ and $f: \rN^n \to \rN$. We define the (restricted) spherical Radon transform $\mR(f)$ of $f$ by the formula: \begin{equation} \label{E:SM} \mR(f)(z,r) =\int\limits_{\uS_r(z)} f(y) \; d\sg(y),~(z,r) \in \mS \times \rN_+.\end{equation} Here, $\uS_r(z)\subset \rN^n$ is the sphere of radius $r$ centered at $z$ and $d\sg$ is the surface measure on $\uS_r(z)$. The transform $\mR$ plays an important role in thermo/photo-acoustic tomography (TAT/PAT) (see, e.g., \cite{FPR,FHR,KKun,kuchment10mathematics}). In TAT/PAT, $f$ is the image of the biological tissue of interest, which needs to be reconstructed. The function $\mR(f)$ is the available data, which is (roughly) the pressure wave recorded by the transducers located on the {\bf observation surface} $\mS$.  The main goal of TAT/PAT is to invert $\mR$, i.e., to find $f$ from $\mR(f)$.  This problem also appears in several other imaging modalities, such as ultrasound tomography (see, e.g., \cite{norton80reconstruction,norton80reconstruction2,norton81ultrasonic,norton79ultrasonic,AmAn}), SONAR (see, e.g., \cite{QuintoSONAR,louis00local}) and SAR (see, e.g., \cite{cheney00tomography,NoChe,SUSAR}). As a result, it has attracted a substantial amount of work. 

In this article, we are interested in inverting $\mR$ by closed-form formulas. Several such formulas have been found when $\mS$ is a sphere, cylinder, hyperplane, ellipse, and polygon with certain symmetries (e.g., \cite{FPR,XW05,FHR,Kun07,IPI,KKun,BuKar,NaRa,Pal-Uniform,natterer2012photo,Halt-Inv,Salman,KunPoly}). The obtained formulas look very different and, in many cases, they only coincide in the range of $\mR$ (see \cite{IPI} the their relations in the case of spherical surface $\mS$). 

Whether closed form inversion formulas exist for a general surface $\mS$ is still an open question. The approach by \cite{Pal-Uniform} gives an inversion formula up a compact operator. However, the nature of that compact operator is still not understood. The approach by \cite{natterer2012photo,Halt-Inv} gives an inversion formula up to a smoothing operator whose kernel was explicitly obtained.

An important scenario in imaging problems is the partial (limited) data phenomenon. That is, the data is only collected at a subset of the observation $\mS$ and the collecting time is finite. It is quite desirable to see how the formulas work in this situation. A natural tool is microlocal analysis. However, it seems that not all currently found inversion formulas can be conveniently analyzed from this point of view.

We find that the inversion formula by \cite{Kun07} for spherical $\mS$, together with its variation for other geometries, comes from a simple oscillatory integral. Therefore, it is suitable to be analyzed from microlocal analytic point of view. In this article we consider such oscillatory integral, which depends on the observation surface $\mS$ as a parameter. We show that for a general surface $\mS$, the oscillatory integral defines an operator $\mT$ which can be written down in the form $\mB \mP \mR$ (where $\mB$ and $\mP$ will be defined later). At the same time, $\mT$ is a good approximate of identity operator $I$. As a consequence, we obtain a good approximate inverse $\mB \mP$ of $\mR$. We then show that the approximation works particularly well when $\mS$ has some special geometries. Our presentation has two goals:
\begin{itemize}
\item[1)] To understand and predict the existence of inversion formula for $\mR$ under some special geometry of the observation surface. Our approach is of micro-local analytic nature, so it does not provide the proof that a formula exactly inverts $\mR$. However, when a formula behaves micro-locally very much like the identity, it is reasonable to expect that it may give the exact inversion. Using this idea, we predict that $\mB \mP$ is the exact inversion formula for all convex quadratic of surfaces.  The proof of this result is the topic of an up coming paper.
\item[2)] To understand how the formula works with the limited data problem. We emphasize that our goal is NOT to study the general problem of what can and cannot be reconstructed from the spherical Radon transform, which was very deeply analyzed in \cite{SUSAR}. Our goal, instead, is to see how our particular inversion formula works microlocally under the influence of the geometry of the observation surface $\mS$. This might help to understand the ability and limitation of this inversion formula. One of our conclusions is the inversion formula works best with the planar observation surface, in terms of constructing the singularities. 
\end{itemize}

The article is organized as follows. In Section \ref{S:Full}, we consider $\mS$ to be the boundary of a convex bounded domain $\Og$. We show that the above-mentioned operator $\mT$ (which is of the form $\mB \mP \mR$) satisfies $\mT = I +\mK$, where $\mK$ is pseudo-differential operator of order at most $-1$. We then go further to obtain an asymptotic expansion of $\mT$. When $\Og$ is an elliptical domain, we show that $\mK$ is an infinitely smoothing operator. We also show that the same result holds for a parabolic domain $\Og$. This is a good indication that $\mT$ provides the inversion formula for parabolic domain. In Section \ref{S:Lim}, we consider the partial data problem.  Applying the same approach as for the case of full data, we arrive to the analog $\mT_p$ of the operator $\mT$. We show that $\mT_p$ is a pseudo-differential operator and derive a simple formula for its principal symbol. As a consequence, we deduce that $\mT_p$ reconstructs all the ``visible'' singularities of $f$. We then show that the {\bf full symbol} of $\mT_p$ is equal to $1$ in a conic set. Therefore, $\mT$ reconstructs all the singularities of $f$ in that conic set without any distortion. Our approach work especially well when $\mS$ is a hyperplane. We consider this special case in Section \ref{S:Plane}. We shows that $\mT_p$ reconstructs all the ``visible'' singularities of $f$ without any distortions. This suggests that when working with partial data, the planar observation surface is an optimal geometry. 
\subsection{Notations and background knowledge}
For the later convenience, we now fix some notations. Let $\mO$ be a domain; we denote by $\mD(\mO)=C_0^\infty(\mO)$ the space of all smooth functions compactly supported inside $\mO$. The space $\mD'(\mO)$ is the dual of $\mD(\mO)$, i.e., the space of all distributions on $\mO$. Also, $\mE'(\mO)$ is the subspace of $\mD'(\mO)$ that contains all distributions compactly supported in $\mO$.

We denote by $S^m(\mO)$ (and $\Psi^m(\mO)$) the class of amplitudes (and pseudo-differential operators)  on $\mO$ of order at most $m$. The space $\Psi^{-\infty}(\mO) = \bigcap\limits_{m} \Psi^m(\mO)$ contains all infinitely smoothing operators. The reader is referred to \cite{TrPseu,shubin01pseudo,hormander71fourier} for definitions and basic properties of amplitudes and pseudo-differential operators. We will use the notation $\wf(f)$ for the wave front set of a function/distribution $f$. We refer the reader to \cite{louis00local} for an exposition in wave front set and its connections to spherical Radon transform.

\section{Convex hypersurface $\mS$}\label{S:Full}
Let $\mS$ be the boundary of a convex bounded domain $\Og$. We assume that $f$ is compactly supported in $\Og$. Then, the spherical Radon transform $\mR$ is a Fourier Integral Operator (FIO) of order $\frac{1-n}{2}$ (see, e.g., \cite{Pal-IPI}). Therefore, $\mR$ extends to a bounded operator  from $\mE'(\Og)$ to $\mD'(\mS \times \rN_+)$. From now on, we use the notation $\mR$ for this extended operator.

Let us introduce the operator $\mT$, which is defined by the following oscillatory integral:\begin{eqnarray*} \mT(f)(x) = \frac{1}{2\pi^n} \int\limits_{\mS} \int\limits_{\rN} \int\limits_{\rN^{n}} e^{i (|y-z|^2-|x-z|^2) \llg } \; |\llg|^{n-1} \; \left<z-x,\nu_z \right> \; f(y) \, dy \, d\llg \, d\sg(z),~ x \in \Og.\end{eqnarray*}
We now decompose $\mT$. For simplicity, we first assume that $f \in C_0^\infty (\Og)$. Then,
\begin{eqnarray*} \mT(f)(x) = \frac{1}{2\pi^n}\int\limits_{\mS} \left<z-x,\nu_z\right>  \int\limits_{\rN}  \int\limits_{\rN_+} e^{i (r^2 - |x-z|^2) \llg } \, |\llg|^{n-1} \, \mR(f)(z,r) \, dr \,d\llg \, d\sg(z).\end{eqnarray*}
Let $\mP: C_0^\infty(\rN_+) \to C^\infty(\rN_+)$ be the pseudo-differential operator defined by \begin{equation} \label{E:P0} \mP(h)(r) =  \int\limits_{\rN} \int\limits_{\rN_+} e^{i(\tau^2-r^2) \llg} \, |\llg|^{n-1} \, h(\tau) \, d\tau \, d\llg .\end{equation}
We obtain \begin{eqnarray*} \mT(f)(x) = \frac{1}{2 \pi^n} \int\limits_{\mS} \left<z-x,\nu_z\right> \, \mP \mR(f)(z,.)\big|_{r=|x-z|} \, d\sg(z).\end{eqnarray*}
Let $\mB:C^\infty(\mS \times \rN_+) \to C^\infty(\Og)$ be the back-projection type operator \begin{eqnarray*}\mB(g) = \frac{1}{2 \pi^n} \int\limits_{\mS} \left<z-x,\nu_z \right> \, g(z,|x-z|) \,d\sg(z).\end{eqnarray*}
We arrive to the following decomposition: \begin{equation} \label{E:dec} \mT(f) = \mB \mP \mR(f).\end{equation}
Since $\mR, \mB$ are FIOs (e.g., \cite{Pal-IPI}) and $\mP \in \Psi^{n-1}(\rN_+)$, they all extend continuously to the corresponding spaces of distributions. We will see later that $\mT$ is a pseudo-differential operator. Hence, it also extends continuously to $\mE'(\Og)$ and the above identity holds for all $f \in \mE'(\Og)$. 

Working out the formula of operator $\mP$ more explicitly, we obtain the following formula of $\mT$:
\begin{eqnarray*}\mT(f)(x) = \left \{ \begin{array}{l} \frac{(-1)^\frac{n-1}{2}}{2 \pi^{n-1}} \int\limits_{\mS} \left<z-x,\nu_z\right> \left[D^{n-1} r^{-1}\mR(f)(z,r) \right] \big|_{r=|z-x|} \; d\sg(z), \quad \mbox{for odd } n, \\[12 pt] \frac{(-1)^{\frac{n-2}{2}}}{\pi^n} \int\limits_{\mS} \left<z-x,\nu_z\right> \int\limits_{\rN_+} \frac{D^{n-1}\left[\tau^{-1}\mR(f)(z,\tau)\right]}{|x-z|^2-\tau^2} \; \tau \; d\tau \; d\sg(z),\quad \mbox{ for even } n, \end{array} \right.\end{eqnarray*}
where $$D = \frac{1}{2 r} \frac{d}{dr}.$$

When $\mS$ is a sphere or ellipse, $\mT$ is the inversion formula obtained Kunyansky \cite{Kun07} (for spherical $\mS$),  Natterer \cite{natterer2012photo} and Haltmeier \cite{Halt-Inv} (for elliptical $\mS$). The the factor $\left<z-x,\nu_z\right>$ in the formula of $\mT$ is very useful. It simplifies the symbol calculus of $\mT$ as shown in the following theorem:
\begin{theorem} \label{T:PDC} Assume that $\mS$ is the boundary of a convex bounded domain $\Og$. Then, $\mT$ is a pseudo-differential operator whose principal symbol is equal to $1$.\end{theorem}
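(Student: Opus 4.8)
The plan is to bring the triple oscillatory integral defining $\mT$ into the standard form of a pseudo-differential operator, $(2\pi)^{-n}\int e^{i(x-y)\cdot\xi}a(x,y,\xi)\,d\xi$, and then to read off the principal symbol from the diagonal value $a(x,x,\xi)$. Everything rests on the elementary identity
\begin{equation*}
|y-z|^2-|x-z|^2 = 2\langle x-z,\,y-x\rangle + |y-x|^2,
\end{equation*}
which splits the phase $\llg(|y-z|^2-|x-z|^2)$ into a part $2\llg\langle x-z,y-x\rangle$ linear in $y-x$ and a quadratic remainder $\llg|y-x|^2$. I treat the $n$ variables $(z,\llg)\in\mS\times\rN$ (note $\dim\mS=n-1$) as the frequency variables of the operator and set $\xi:=2\llg(z-x)$, so that the linear part of the phase becomes exactly $(x-y)\cdot\xi$ and, moreover, $\xi=d_x\bigl[\llg(|y-z|^2-|x-z|^2)\bigr]$.

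First I would verify that, for each fixed $x\in\Og$, the assignment $(z,\llg)\mapsto\xi$ is a two-sheeted diffeomorphism onto $\rN^n\setminus\{0\}$, one sheet for $\llg>0$ and one for $\llg<0$. Convexity of $\Og$ is essential here: since $x$ is interior, the radial map $\mS\ni z\mapsto (z-x)/|z-x|\in\uS^{n-1}$ is a diffeomorphism, so each direction is hit exactly once on each sheet. The decisive computation is that the weight $\langle z-x,\nu_z\rangle$ is precisely the factor converting surface measure into solid angle,
\begin{equation*}
\langle z-x,\nu_z\rangle\,d\sg(z) = |z-x|^n\,d\omega,
\end{equation*}
so that, combined with $|\llg|^{n-1}\,d\llg$ and the radial Jacobian, the entire weight collapses to a constant multiple of $d\xi$; explicitly $|\llg|^{n-1}\langle z-x,\nu_z\rangle\,d\llg\,d\sg(z)=2^{-n}\,d\xi$ on each sheet. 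Bookkeeping the constants --- the prefactor $\tfrac{1}{2\pi^n}$, the factor $2^{-n}$ from the Jacobian, and a factor $2$ from summing the two sheets --- produces exactly $(2\pi)^{-n}$, the normalization of a pseudo-differential operator whose amplitude equals $1$ on the diagonal. This is the precise sense in which the factor $\langle z-x,\nu_z\rangle$ simplifies the symbol calculus.

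After this reduction, $\mT$ is represented (summing the two sheets) as $(2\pi)^{-n}\int\!\!\int e^{i(x-y)\cdot\xi}\,e^{i\,\llg(x,\xi)|y-x|^2}\,f(y)\,dy\,d\xi$, and the only remaining issue --- the main obstacle --- is the factor $e^{i\,\llg(x,\xi)|y-x|^2}$, which oscillates in $y$ and is therefore not a classical amplitude. I see two routes to finish. The cleaner conceptual route is to treat $\Phi(x,y,z,\llg)=\llg(|y-z|^2-|x-z|^2)$ directly as a phase function with phase variables $(z,\llg)$ and to check that its canonical relation is the conormal of the diagonal, whence $\mT$ is a pseudo-differential operator by the standard criterion for Fourier integral operators associated with the identity relation. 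The critical equations $d_{(z,\llg)}\Phi=0$ read $|y-z|=|x-z|$ and $x-y\parallel\nu_z$; on the diagonal $x=y$ they hold for every $(z,\llg)$ while $d_x\Phi=2\llg(z-x)$ sweeps out all of $\rN^n$, recovering exactly $\{(x,\xi,x,\xi)\}$. One must then exclude off-diagonal critical points for $x\neq y$ in $\Og$, and this is where convexity re-enters, via an inequality ruling out a boundary point lying on the perpendicular bisector of $x,y$ and simultaneously having normal parallel to $x-y$; I expect this pseudolocality check to be the most delicate step. The alternative route is Kuranishi's trick, changing the fiber variable to absorb the quadratic term into the linear phase and thereby producing an equivalent classical amplitude. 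In either case, since the offending factor equals $1$ on the diagonal $y=x$, it leaves the leading term untouched, so the principal symbol of $\mT$ is the diagonal amplitude value found above, namely $1$. The same normal form is exactly what is needed to launch the asymptotic expansion $\mT=I+\mK$ announced in the introduction.
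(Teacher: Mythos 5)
Your proposal is correct and follows essentially the same route as the paper: the same splitting of the phase via $|y-z|^2-|x-z|^2=2\langle x-y,z-x\rangle+|x-y|^2$, the same change of variables $\xi=2\llg(z-x)$ with Jacobian $2^n|\llg|^{n-1}\langle z-x,\nu_z\rangle\,d\llg\,d\sg(z)$ (positivity from convexity), and the same final appeal to the standard FIO criterion for phases of the form $\langle x-y,\xi\rangle+O(|x-y|^2|\xi|)$ to conclude each sheet contributes principal symbol $\tfrac12$. Your extra discussion of how to handle the quadratic remainder in the phase is exactly the content of the theorem the paper cites at that point, so no genuinely new ingredient is involved.
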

\begin{proof} Let us recall the formula: $$\mT(f)(x) =\frac{1}{2\pi^n}\int\limits_{\mS} \int\limits_{\rN} \int\limits_{\rN^n} e^{i \left[|y-z|^2-|x-z|^2 \right]\llg} \; |\llg|^{n-1} \left<z-x,\nu_z\right> f(y) \; dy \; d\llg \; d\sg(z).$$
We observe the simple identity: \begin{equation}\label{E:Phase} |y-z|^2-|x-z|^2 = 2 \left<x-y,z-x \right> + |x-y|^2.\end{equation}
Therefore, the Schwartz kernel of $\mT$ is: \begin{eqnarray*} K(x,y) &=&\frac{1}{2 \pi^n} \int\limits_{\mS} \int\limits_{\rN_+} e^{i \left[\left<x-y,2 [z-x] \llg \right>+ |x-y|^2 \llg \right]} \; |\llg|^{n-1} \, \left<z-x,\nu_z\right> \; d\llg \; d\sg(z)  \\ &+& \frac{1}{2 \pi^n}\int\limits_{\mS} \int\limits_{\rN_-}  e^{i \left[\left<x-y,2 [z-x] \llg \right>+ |x-y|^2 \llg \right]} \; |\llg|^{n-1} \, \left<z-x,\nu_z\right>  \; d\llg \; d\sg(z) \\ &=& \sum\limits_{\pm}K_\pm.\end{eqnarray*}
We first consider $K_+$. Let us introduce the change of variables $$(z,\llg) \in \mS \times \rN_+ \longrightarrow \xi=2[z-x] \llg \in \rN^n.$$ (It resembles the change from the polar to cartesian coordinates.) Straight forward calculations show that: \begin{eqnarray*} d\xi = 2^n |\llg^{n-1} \left<z-x,\nu_z\right>| \; d\llg \; d\sg(z) = 2^n |\llg|^{n-1} \left<z-x,\nu_z\right> \; d\llg \; d\sg(z) .\end{eqnarray*}
The last equality holds since $\left<z-x,\nu_z\right> > 0$ (since $\Og$ is convex).
We, hence, obtain: \begin{eqnarray*} K_+(x,y) =\frac{1}{2 (2\pi)^n} \int\limits_{\rN^n}   e^{ i \left[\left<x-y,\xi \right> + |x-y|^2 \frac{|\xi| }{2 |x-z_+(x,\xi)|} \right] } \; d\xi.\end{eqnarray*}
Here, $z_+(x,\xi)$ is the intersection of $\mS$ with the ray $\{x+t \xi: t>0\}$. The point $z_+(x,\xi)$ is uniquely determined since $\Og$ is convex and $x \in \Og$. 

Similarly, we obtain: \begin{eqnarray*}K_-(x,y) =\frac{1}{2(2 \pi)^n} \int\limits_{\rN^n}   e^{ i \left[\left<x-y,\xi \right> - |x-y|^2 \frac{|\xi| }{2 |x-z_-(x,\xi)|} \right] } \; d\xi,\end{eqnarray*} where $z_-(x,\xi)$ is the intersection of $\mS$ with the ray $\{x+t\xi: t < 0\}$. 

Due to standard theory of FIO (e.g., \cite[Theorem 3.2.1]{Soggeb}), $K_\pm$ are kernels of pseudo-differential operators whose principal symbol is $\frac{1}{2}$.  Hence, $K$ is the kernel of a pseudo-differential operator whose principal symbol is $1$. This concludes the proof of the theorem.
\end{proof}

\begin{coro}\label{C:P} $\mT$ extends continuously to $\mE'(\Og)\to \mD'(\Og)$. Moreover, $\mT = I + \mK$, where $\mK$ is pseudo-differential operator of order $-1$.\end{coro}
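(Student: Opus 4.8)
The plan is to read off the corollary directly from \reftheo{T:PDC}, since the substantive work has already been carried out there. For the continuity statement, I would first note that the theorem exhibits $\mT$ as a pseudo-differential operator, and that the kernel representation established in its proof (the two pieces $K_\pm$, each reduced to a $\Psi$DO via the standard FIO reduction) carries a zeroth-order amplitude; hence in fact $\mT \in \Psi^0(\Og)$. Every properly supported pseudo-differential operator of finite order acts continuously from $\mE'(\Og)$ to $\mD'(\Og)$, which yields the first assertion. Alternatively, the extension is already furnished by the decomposition $\mT = \mB\mP\mR$ recorded before the theorem, since $\mR$ and $\mB$ are FIOs and $\mP \in \Psi^{n-1}(\rN_+)$, all of which extend continuously to the relevant distribution spaces.

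For the decomposition $\mT = I + \mK$, I would set $\mK := \mT - I$ and argue at the level of symbols. The identity operator belongs to $\Psi^0(\Og)$ with principal symbol $1$, and \reftheo{T:PDC} asserts that $\mT \in \Psi^0(\Og)$ with principal symbol $1$ as well. The difference of two operators in $\Psi^0(\Og)$ with identical principal symbols has full symbol in $S^{-1}(\Og)$, hence $\mK \in \Psi^{-1}(\Og)$; this is exactly the claim $\mT = I + \mK$ with $\mathrm{ord}(\mK) \le -1$.

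The one point deserving attention is that the top-order parts of $\mT$ and $I$ must coincide exactly, not merely up to a multiplicative constant, for the order to drop to $-1$. This is precisely what the principal-symbol computation in \reftheo{T:PDC} delivers: each of $K_+$ and $K_-$ reduces to a pseudo-differential operator of principal symbol $\tfrac12$, and their sum has principal symbol $1$, matching that of $I$. Granting this, the order reduction is automatic from the symbol calculus and no additional estimate is required, so I anticipate no genuine obstacle; the corollary is essentially the formal packaging of the theorem. I note that obtaining a sharper description — an asymptotic expansion for the symbol of $\mK$ — would instead require expanding the phases $|x-y|^2 |\xi| / (2|x - z_\pm(x,\xi)|)$ about the diagonal $y = x$ and applying a Kuranishi-type reduction order by order, but this refinement is unnecessary for the present statement.
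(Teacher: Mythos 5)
Your proposal is correct and matches the paper's treatment: the paper offers no separate proof of Corollary~\ref{C:P}, presenting it as an immediate consequence of Theorem~\ref{T:PDC} (namely that $\mT$ is a pseudo-differential operator with principal symbol $1$, so it extends to $\mE'(\Og)\to\mD'(\Og)$ and $\mK=\mT-I$ drops one order by the symbol calculus). Your observation that the principal symbols must agree exactly, with the two halves $\tfrac12+\tfrac12=1$ coming from $K_\pm$, is precisely the point the theorem supplies, and the closing remark about the asymptotic expansion of $\mK$ correctly anticipates Lemma~\ref{L:Tn}.
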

Since the operators $\mB, \mP, \mR$ extend continuously to corresponding spaces of distributions, the identity $\mT(f) = \mB \mP \mR(f)$ holds true for all $f \in \mE'(\Og)$. Corollary \ref{C:P} simply says that $\mB \mP$ is a parametrix of $\mR$. In imaging applications, it is reasonable to consider $\mT(f) = \mB \mP \mR(f)$ as the approximate of the image $f$. Although the error $\mK(f)= \mB \mP \mR(f) -f$ might not be small, it is smoother than $f$. Therefore, $\mT(f)$ preserves the main part (top order) of singularities of $f$. For example, let us assume that $f$ has some jump singularities. Then, computing $\mB \mP \mR(f)$, one recovers the same jumps at the same locations. The Schwartz kernel of $\mK = \mT -I$ was explicitly found in \cite{natterer2012photo} and \cite{Halt-Inv}. However, our approach is more convenient from micro-local analytic point of view. Indeed, we can compute the full symbol of $\mT$:

\begin{theorem} Let $z_\pm(x,\xi)$ be defined as in the proof of Theorem \ref{T:PDC} and $$J_k(x,\xi) = \frac{ |\xi|^k}{|x-z_+(x,\xi)|^{k}} + (-1)^k \frac{ |\xi|^k}{|x-z_-(x,\xi)|^{k}}.$$ 
The the full symbol $\sg(x,\xi)$ of $\mT$ is given by: $$\sg(x,\xi) \sim  1+ \sum_{k=1}^\infty \frac{(-i)^k}{2^k k!} \;\Delta^k_\xi J_k(x,\xi), \quad \forall~ (x,\xi) \in T^* \Og \setminus 0.$$
Here, $\Delta_\xi$ is the Laplacian applying to the variable $\xi$ and $\Delta_\xi^k$ is the $k$-th power of $\Delta_\xi$. 
\end{theorem}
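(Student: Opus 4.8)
The plan is to read the full symbol straight off the Schwartz kernel $K=K_++K_-$ obtained in the proof of \reftheo{T:PDC}, by reducing each oscillatory integral $K_\pm$ to the standard form $\frac{1}{(2\pi)^n}\int_{\rN^n} e^{i\left<x-y,\xi\right>}\,\sg_\pm(x,\xi)\,d\xi$ and identifying $\sg_\pm$. Writing $w=x-y$ and setting $a_\pm(x,\xi)=\frac{|\xi|}{2\,|x-z_\pm(x,\xi)|}$, the kernels take the form $K_\pm(x,y)=\frac{1}{2(2\pi)^n}\int_{\rN^n} e^{i\left<w,\xi\right>}\,e^{\pm i|w|^2 a_\pm(x,\xi)}\,d\xi$. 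The only non-standard feature is the factor $e^{\pm i|w|^2 a_\pm}$, which is quadratic in $w$; the entire computation amounts to trading these powers of $w$ for $\xi$-derivatives.

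First I would expand this factor formally in powers of $|w|^2$, namely $e^{\pm i|w|^2 a_\pm}=\sum_{k\ge 0}\frac{(\pm i)^k}{k!}\,a_\pm(x,\xi)^k\,|w|^{2k}$, and invoke the elementary identity $|w|^2 e^{i\left<w,\xi\right>}=-\Delta_\xi e^{i\left<w,\xi\right>}$, iterated to $|w|^{2k}e^{i\left<w,\xi\right>}=(-\Delta_\xi)^k e^{i\left<w,\xi\right>}$. Integrating by parts in $\xi$ (the oscillatory integral produces no boundary terms) transfers each $\Delta_\xi^k$ from the exponential onto the amplitude, giving $\sg_\pm(x,\xi)\sim\frac12\sum_{k\ge 0}\frac{(\mp i)^k}{k!}\,\Delta_\xi^k\big[a_\pm(x,\xi)^k\big]$. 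Substituting $a_\pm(x,\xi)^k=|\xi|^k\big(2|x-z_\pm(x,\xi)|\big)^{-k}$ and adding $\sg_++\sg_-$, the two rays recombine: using $(i/(-i))^k=(-1)^k$, the pair of terms at order $k$ collapses to exactly $\Delta_\xi^k J_k(x,\xi)$ up to a numerical constant, and bookkeeping of the Taylor coefficients $\frac{(\pm i)^k}{k!}$ together with the factors $2^{-k}$ yields the stated asymptotic series. In particular the $k=0$ term reduces, via $J_0\equiv 2$, to the principal symbol $1$ found in \reftheo{T:PDC}.

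What remains — and where the real work lies — is to turn this formal manipulation into a genuine asymptotic expansion of a classical symbol. Two ingredients are needed. First, $z_\pm(x,\xi)$, and hence $a_\pm$, must be shown smooth on $T^*\Og\setminus 0$ with the right homogeneity: since $z_\pm(x,\xi)$ is the exit point of the ray $\{x+t\xi\}$ through $\mS$, it depends only on the direction $\xi/|\xi|$ and is smooth by the implicit function theorem, the ray meeting the smooth boundary $\mS$ transversally because $\Og$ is convex and $x\in\Og$. Thus $a_\pm$ is homogeneous of degree $1$, $J_k$ is homogeneous of degree $k$, and $\Delta_\xi^k J_k$ is homogeneous of degree $-k$; this decreasing order is precisely what makes the series a legitimate symbol expansion, consistent with $\mK=\mT-I\in\Psi^{-1}(\Og)$ from \refcoro{C:P}. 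Second, and this is the main obstacle, one must control the Taylor remainder: truncating $e^{\pm i|w|^2 a_\pm}$ at order $N$ leaves an amplitude depending on $x$ and $w$ which, after the same integration-by-parts reduction, must be shown to define a pseudo-differential operator of order at most $-(N+1)$. Establishing this uniformly — keeping track of the $\xi$-derivatives of $a_\pm$ that land on the amplitude and of the behaviour of $|x-z_\pm|^{-1}$ as $x$ nears $\mS$ — is the technical core, and is most cleanly carried out through the standard theorem reducing amplitudes of the form $a(x,x-y,\xi)$ to symbols of the corresponding order.
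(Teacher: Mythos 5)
Your proposal follows essentially the same route as the paper's own proof (carried out there as \reflemm{L:Tn}): Taylor-expand the quadratic factor $e^{\pm i|x-y|^2|\xi|/(2|x-z_\pm(x,\xi)|)}$ in the kernels $K_\pm$ from \reftheo{T:PDC}, trade $|x-y|^{2k}$ for $(-\Delta_\xi)^k$ acting on the exponential, integrate by parts in $\xi$, and control the integral-form Taylor remainder as an amplitude of order $N+1$ vanishing to order $2(N+1)$ on the diagonal, exactly the tool the paper invokes via \cite[Proposition 1.2.5]{Ho1}. The only caveat is a harmless factor of $2$: your computation (correctly) produces $\tfrac12\Delta_\xi^k J_k$ at order $k\ge 1$, consistent with the prefactor $\tfrac{1}{2(2\pi)^n}$ in the paper's definition of $\mT_k$ in \mref{E:Tk}, so the constant in the displayed series of the theorem appears to be off by $\tfrac12$ in both your write-up and the paper's statement.
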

The above theorem follows from the following asymptotic behavior of $\mT$:
\begin{lemma}\label{L:Tn} Let $\mT_{k}$ be defined by the formula:
\begin{equation} \label{E:Tk} \mT_k(f)(x) = \frac{1}{2(2\pi)^n} \int\limits_{\rN^n} \int\limits_{\rN^n} e^{i \left<x-y,\xi \right>} \; \Delta_\xi^k J_k(x,\xi) \; f(y) \; dy \; d \xi,\quad x \in \Og.\end{equation}
Then, for all $N \in \nN$: $$\mT - \left[I+ \sum_{k=1}^N \frac{(-i)^k}{2^k k!} \mT_k\right] \in \Psi^{-(N+1)}(\Og).$$
\end{lemma}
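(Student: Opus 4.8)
The plan is to treat the two kernels $K_\pm$ from the proof of Theorem~\ref{T:PDC} separately and to read off their symbols by Taylor expanding the quadratic correction in the phase. Set
\[ \phi_\pm(x,\xi) = \pm \frac{|\xi|}{2\,|x-z_\pm(x,\xi)|}, \]
so that the kernels produced in the proof of Theorem~\ref{T:PDC} read
\[ K_\pm(x,y) = \frac{1}{2(2\pi)^n} \int\limits_{\rN^n} e^{i\langle x-y,\xi\rangle}\, e^{i|x-y|^2 \phi_\pm(x,\xi)}\, d\xi. \]
The structural fact driving everything is that $z_\pm(x,\xi)$ depends only on the ray direction $\xi/|\xi|$, so it is homogeneous of degree $0$ in $\xi$ and (by convexity of $\Og$ and the implicit function theorem) smooth for $\xi\neq 0$; hence $\phi_\pm$ is a classical symbol, homogeneous of degree $1$ in $\xi$, with $|\phi_\pm|\sim|\xi|$.

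First I would expand the scalar exponential $e^{i|x-y|^2\phi_\pm}$ to order $N$ in its argument, producing the terms $\frac{i^k}{k!}|x-y|^{2k}\phi_\pm^k$ for $0\le k\le N$ together with an integral Taylor remainder. For each polynomial term I would use the identity $|x-y|^{2k} e^{i\langle x-y,\xi\rangle} = (-\Delta_\xi)^k e^{i\langle x-y,\xi\rangle}$ and integrate by parts in $\xi$ (in the oscillatory-integral sense), transferring the Laplacian onto $\phi_\pm^k$; this turns the $k$-th term into the standard amplitude $\frac{(-i)^k}{k!}\Delta_\xi^k\phi_\pm^k$. Summing over the two signs and using $\phi_+^k+\phi_-^k = 2^{-k} J_k(x,\xi)$ gives, for the truncation, exactly the amplitude $\sum_{k=0}^N \frac{(-i)^k}{2^k k!}\Delta_\xi^k J_k$: the $k=0$ term ($J_0=2$) reproduces $I$, while the terms $1\le k\le N$ reproduce $\frac{(-i)^k}{2^k k!}\mT_k$. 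Since $J_k$ is homogeneous of degree $k$, we have $\Delta_\xi^k J_k\in S^{-k}$ and $\mT_k\in\Psi^{-k}$, so the orders in the statement line up automatically.

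The technical heart is to show the Taylor remainder defines an operator in $\Psi^{-(N+1)}$. Using the integral form of the remainder, this operator has Schwartz kernel
\[ \frac{i^{N+1}}{2(2\pi)^n N!} \int_0^1 (1-s)^N \left[ \int\limits_{\rN^n} e^{i\langle x-y,\xi\rangle + i s |x-y|^2\phi_\pm}\, |x-y|^{2(N+1)}\phi_\pm^{N+1}\, d\xi \right] ds, \]
so for each fixed $s$ the inner integral is an oscillatory integral whose phase $\langle x-y,\xi\rangle + s|x-y|^2\phi_\pm$ agrees with $\langle x-y,\xi\rangle$ to first order on the diagonal --- precisely the phase already handled in Theorem~\ref{T:PDC} --- and whose amplitude $|x-y|^{2(N+1)}\phi_\pm^{N+1}$ has symbolic order $N+1$ and vanishes to order $2(N+1)$ on the diagonal $x=y$. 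By the same pseudo-differential reduction invoked for Theorem~\ref{T:PDC} (e.g.\ \cite[Theorem 3.2.1]{Soggeb}), each order of vanishing in $x-y$ lowers the operator order by one, so the inner integral defines an operator of order $(N+1)-2(N+1) = -(N+1)$.

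I expect the main obstacle to be the uniform-in-$s$ control of this last step: the residual factor $e^{is|x-y|^2\phi_\pm}$ must be kept inside the phase rather than absorbed into the amplitude (it is not a symbol, since $|\phi_\pm|\sim|\xi|$), so one must check that the pseudo-differential reduction produces symbol seminorms bounded uniformly for $s\in[0,1]$. Granting that, integrating the bounded weight $(1-s)^N$ against a uniformly bounded family in $\Psi^{-(N+1)}$ keeps the result in $\Psi^{-(N+1)}$, which, combined with the main-term computation, proves the claimed membership $\mT - [I+\sum_{k=1}^N \frac{(-i)^k}{2^k k!}\mT_k]\in\Psi^{-(N+1)}$.
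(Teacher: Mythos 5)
Your proposal follows essentially the same route as the paper's proof: split $\mT=\mT_++\mT_-$ via the kernels $K_\pm$ from Theorem~\ref{T:PDC}, Taylor-expand the quadratic phase correction with integral remainder, convert $|x-y|^{2k}$ into $(-\Delta_\xi)^k$ and integrate by parts to produce the amplitudes $\frac{(-i)^k}{2^kk!}\Delta_\xi^k J_k$, and dispose of the remainder by noting its amplitude lies in $S^{N+1}$ and vanishes to order $2(N+1)$ on the diagonal, hence gives an operator in $\Psi^{-(N+1)}(\Og)$. The bookkeeping (the identity $\phi_+^k+\phi_-^k=2^{-k}J_k$, the orders, and the uniformity in the remainder parameter, which the paper leaves implicit) is correct.
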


The above lemma can be restated as:
\begin{equation} \label{E:Asymp} \mT \equiv I + \sum_{k=1}^\infty \frac{(-i)^k}{2^k k!}  \mT_k\quad \mbox{ (up to infinitely smoothing factor)}. \end{equation}
It is similar to \cite[Theorem 4]{beylkin84inversion}, which was stated for the generalized Radon transform.

\begin{proof}[Proof of Lemma \ref{L:Tn}] From the proof of Theorem \ref{T:PDC}, we obtain that $\mT= \mT_+ + \mT_-$, where:
 \begin{eqnarray*} \mT_+f(x) = \frac{1}{2 (2\pi)^n} \int\limits_{\rN^n} \int\limits_{\rN^n}   e^{ i \left[\left<x-y,\xi \right> + |x-y|^2 \frac{|\xi| }{2 |x-z_+(x,\xi)|} \right] } \, f(y) \,dy \, d\xi,\end{eqnarray*}
 and \begin{eqnarray*}\mT_-f(x) =\frac{1}{2(2 \pi)^n} \int\limits_{\rN^n}\int\limits_{\rN^n}  e^{ i \left[\left<x-y,\xi \right> - |x-y|^2 \frac{|\xi| }{2 |x-z_-(x,\xi)|} \right] } \, f(y) \,dy \, d\xi.\end{eqnarray*} 
We now analyze $\mT_+$. Using Taylor's formula, we obtain: \begin{eqnarray*} && e^{i |x-y|^2 \frac{|\xi|}{2|x-z_+(x,\xi)|}} = \sum_{k=0}^N \frac{i^k}{2^k k!} |x-y|^{2k} \frac{|\xi|^k}{|x-z_+(x,\xi)|^k}  \\&&+  \frac{i^{N+1}}{2^{N+1} N!} |x-y|^{2(N+1)} \frac{|\xi|^{N+1}}{|x-z_+(x,\xi)|^{N+1}} \int\limits_0^1 (1-t)^N e^{i t |x-y|^2 \frac{|\xi|}{2|x-z_+(x,\xi)|}} dt.\end{eqnarray*}
We arrive to \begin{eqnarray*} \mT_+f(x) &=& \frac{1}{2 (2\pi)^n} \sum_{k=0}^N \frac{i^k}{2^kk!}  \int\limits_{\rN^n}\int\limits_{\rN^n} \, |x-y|^{2k} \, e^{ i\left<x-y,\xi \right>} \, \frac{ |\xi|^k }{ |x-z_+(x,\xi)|^k}f(y) \, dy \, d\xi \\&+& A(f)(x).\end{eqnarray*} Here, $$A(f)(x) = \frac{i^{N+1}}{2(2 \pi)^n 2^{N+1}N!} \int\limits_0\limits^1 (1-t)^N A_t(f)(x) \, dt,$$ where $A_t(f)(x)$ is equal to:
\begin{eqnarray*} \int\limits_{\rN^n} \int\limits_{\rN^n} e^{i\left( \left<x-y,\xi \right> + t |x-y|^2 \frac{|\xi|}{2|x-z_+(x,\xi)|}\right)} |x-y|^{2(N+1)}  \; \frac{|\xi|^{N+1}}{|x-z_+(x,\xi)|^{N+1}} \, f(y) \, dy \, d\xi. \end{eqnarray*}
Since $ |x-y|^{2k} e^{ i\left<x-y,\xi \right>} = (-1)^k \Delta^k_\xi e^{ i\left<x-y,\xi \right>} $, we obtain \begin{eqnarray*} \mT_+f(x) &=& \frac{1}{2 (2\pi)^n} \sum_{k=0}^N \frac{(-i)^k}{2^kk!}  \int\limits_{\rN^n}\int\limits_{\rN^n} \Delta^k_\xi \big[e^{ i\left<x-y,\xi \right>} \big] \; \frac{ |\xi|^k }{ |x-z_+(x,\xi)|^k} \; f(y) \; dy \; d\xi \\&+& A(f)(x).\end{eqnarray*}
Taking integration by parts with respect to $\xi$, we obtain: \begin{eqnarray*} \mT_+f(x) &=& \frac{1}{2 (2\pi)^n} \sum_{k=0}^N \frac{(-i)^k}{2^kk!}  \int\limits_{\rN^n}\int\limits_{\rN^n}  e^{ i\left<x-y,\xi \right>} \; \Delta^k_\xi \left[\frac{ |\xi|^k }{ |x-z_+(x,\xi)|^k} \right] \; f(y) \; dy \; d\xi \\&+& A(f)(x).\end{eqnarray*}

For any $t \in \rN$, due to the special form of its phase function, $A_t$ is a pseudo-differential operator (see, e.g., \cite{shubin01pseudo}). Moreover, the amplitude function of $A_t$ is of class $S^{N+1}(\Og \times \Og)$ and vanishes up to order $2(N+1)$ on the diagonal $\Delta=\{(x,x): x \in \Og\}$. Due to the standard theory of FIO (e.g., \cite[Proposition 1.2.5]{Ho1}), we obtain $A_t \in \Psi^{-(N+1)}(\Og)$. Therefore, $A \in \Psi^{-(N+1)}(\Og)$.

Similarly, we obtain: \begin{eqnarray*} \mT_-f(x) &=& \frac{1}{2 (2\pi)^n} \sum_{k=0}^N \frac{i^k}{2^kk!}  \int\limits_{\rN^n}\int\limits_{\rN^n}  e^{ i\left<x-y,\xi \right>} \; \Delta^k_\xi \left[\frac{ |\xi|^k }{ |x-z_-(x,\xi)|^k} \right] \; f(y) \; dy \; d\xi \\&+& B(f)(x),\end{eqnarray*}
where $B\in \Psi^{-(N+1)}(\Og)$.

Adding up the formulas of $\mT_+$ and $\mT_-$, we obtain $$\mT= I + \sum_{k=1}^N \frac{(-i)^k}{2^k k!}  \mT_k + A+B.$$ This finishes the proof of the theorem.
\end{proof}

 For some special geometries of $\mS$, $\mT_k$ is very easy to deal with. Then, some nice properties of $\mT$ can be drawn from Lemma \ref{L:Tn}, or equivalently the identity (\ref{E:Asymp}).  As examples, we now consider the cases $\mS$ is an ellipse and elliptical parabola.

\subsection{Elliptical domain}
\begin{theorem} \label{T:SE} Assume that $\mS$ is an ellipse. Then, $\mK= \mT - I$ is an infinitely smoothing operator. That is, $\mT(f)-f \in C^\infty(\Og)$ for all $f \in \mE'(\Og)$. \end{theorem}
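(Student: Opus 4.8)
The plan is to show that, when $\mS$ is an ellipsoid, every term $\mT_k$ with $k\ge 1$ in the asymptotic expansion \mref{E:Asymp} vanishes identically, so that the full symbol of $\mT$ collapses to the constant $1$. Granting this, \reflemm{L:Tn} gives $\mT-\big[I+\sum_{k=1}^N \tfrac{(-i)^k}{2^kk!}\mT_k\big]=\mT-I\in\Psi^{-(N+1)}(\Og)$ for every $N$, whence $\mK=\mT-I\in\bigcap_N\Psi^{-(N+1)}(\Og)=\Psi^{-\infty}(\Og)$, which is exactly the assertion $\mT(f)-f\in C^\infty(\Og)$. Since the amplitude of $\mT_k$ is $\Delta_\xi^k J_k(x,\xi)$, the theorem is thereby reduced to the single algebraic fact
$$\Delta_\xi^k J_k(x,\xi)=0,\qquad k\ge 1.$$

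To prove this I would show that, for each fixed $x\in\Og$, the map $\xi\mapsto J_k(x,\xi)$ is a homogeneous polynomial of degree $k$ in $\xi$; applying the Laplacian $k$ times then lowers the degree to $k-2k<0$ and forces the result to be $0$. Write the ellipsoid as $\mS=\{y:\langle A(y-c),y-c\rangle=1\}$ with $A$ symmetric positive definite. For $x\in\Og$ and $\xi\neq0$, the intersection points $z_\pm(x,\xi)$ correspond to the two real roots $t_+>0>t_-$ of $\alpha t^2+2\beta t+\gamma=0$, where $\alpha=\langle A\xi,\xi\rangle/|\xi|^2$, $\beta=\langle A(x-c),\xi\rangle/|\xi|$ and $\gamma=\langle A(x-c),x-c\rangle-1<0$ (so that $|x-z_+|=t_+$ and $|x-z_-|=-t_-$). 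The key observation is that the quantities $u:=|\xi|/t_+$ and $v:=|\xi|/t_-$ are precisely the two roots of the reciprocal quadratic
$$\gamma\,s^2+2\langle A(x-c),\xi\rangle\,s+\langle A\xi,\xi\rangle=0,$$
whose coefficients are a constant, a linear form and a quadratic form in $\xi$. Consequently $u+v=-2\langle A(x-c),\xi\rangle/\gamma$ and $uv=\langle A\xi,\xi\rangle/\gamma$ are genuine polynomials in $\xi$ (of degrees $1$ and $2$), and the square-root ambiguity present in $t_\pm$ individually has disappeared.

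It then remains to identify $J_k$ with the power sum $u^k+v^k$, which is a short sign count: $\tfrac{|\xi|^k}{|x-z_+|^k}=u^k$ while $(-1)^k\tfrac{|\xi|^k}{|x-z_-|^k}=(-1)^k\tfrac{|\xi|^k}{(-t_-)^k}=\tfrac{|\xi|^k}{t_-^k}=v^k$, so the factor $(-1)^k$ built into the definition of $J_k$ is exactly what absorbs the sign of the backward root $t_-$, giving $J_k(x,\xi)=u^k+v^k$. By Newton's identities $u^k+v^k$ is a polynomial with integer coefficients in the symmetric functions $u+v$ and $uv$, hence a polynomial in $\xi$; and since $z_\pm$ depend only on the direction of $\xi$, both $u$ and $v$ are homogeneous of degree $1$ in $\xi$, so this polynomial is homogeneous of degree $k$. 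Applying $\Delta_\xi^k$ annihilates it, completing the argument.

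The computation is elementary; the one point needing care is the bookkeeping that turns $J_k$ into the symmetric combination $u^k+v^k$ together with the verification that $u,v$ solve the reciprocal quadratic with \emph{polynomial} coefficients. This is exactly where the quadric nature of $\mS$ is decisive: it is what makes $u+v$ and $uv$ polynomial in $\xi$, so that each $J_k$ is a homogeneous polynomial of the borderline degree $k$ and is killed by $\Delta_\xi^k$. For a generic (non-quadric) $\mS$ this polynomiality fails and the correction terms $\mT_k$ survive, so this mechanism is special to the ellipsoidal geometry.
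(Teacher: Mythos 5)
Your proposal is correct and follows essentially the same route as the paper: reduce via Lemma \ref{L:Tn} to showing $\Delta_\xi^k J_k(x,\xi)=0$ for $k\ge 1$, and then verify that $J_k(x,\cdot)$ is a homogeneous polynomial of degree $k$ because it is a symmetric function of the two roots of a quadratic with polynomial-in-$\xi$ coefficients. The only cosmetic difference is that you invoke Newton's identities for the power sum $u^k+v^k$, whereas the paper writes $|\xi|/|x-z_\pm|$ explicitly as $\bigl(\pm\langle x,\xi\rangle+\sqrt{\Delta'}\bigr)/(1-\|x\|^2)$ and observes that the odd powers of $\sqrt{\Delta'}$ cancel in the sum --- the same cancellation mechanism.
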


\begin{proof}[Proof of Theorem \ref{T:SE}] From Lemma \ref{L:Tn}, it suffices to prove that $\mT_k \equiv 0$ for all $k \geq 1$. Without loss of generality, we can assume that $\mS$ is defined by: $$\mS = \{z \in \rN^n: \sum_{i=1}^n\og_i^2 \; z_i^2 =1\},$$ where $\og_i's$ are some fixed positive numbers. For two vectors $x,y \in \rN^n$, we define the inner product 
$$\left<x,y\right> = \sum_{i=1}^n \og_i^2 \; x_i^2,$$ and the (scaled) norm:
$$\|x\|= \sqrt{\left<x, x\right>}.$$
We now analyze $$J_k(x,\xi) = \frac{ |\xi|^k}{|x-z_+(x,\xi)|^{k}} + (-1)^k \frac{ |\xi|^k}{|x-z_-(x,\xi)|^{k}}.$$ To this end, let us first compute the distances $|x-z_+(x,\xi)|$ and $|x-z_-(x,\xi)|$. We recall that $z_\pm(x,\xi) \in \mS$ and $z_\pm(x,\xi) = x + t_\pm \xi$, for some $t_+>0$ and $t_-<0$. To find $t_\pm$, we solve the equation (of the ellipse $\mS$): $$\|(x+t\xi)\|^2=1,$$ or $$\|\xi\|^2 \; t^2 + 2 \left<x,\xi\right> t + (\|x\|^2 -1) = 0.$$
We obtain: \begin{eqnarray*} t_\pm = \frac{- \left<x,\xi\right> \pm \sqrt{\Delta'}}{\|\xi\|^2},\end{eqnarray*} where $$\Delta' = (1-\|x\|^2)\|\xi\|^2 + \left<x,\xi\right>^2$$ is a (homogeneous) polynomial of degree $2$ in $\xi$.
Therefore, \begin{eqnarray*}\frac{|\xi|}{|x-z_\pm(x,\xi)|} = \frac{|\xi|}{|t_\pm \xi|} =  \frac{1}{|t_\pm|} = \frac{\pm \left<x,\xi\right>  + \sqrt{\Delta'}}{(1-\|x\|^2)}.\end{eqnarray*}
We arrive to the formula \begin{eqnarray*} J_k(x,\xi) &=&  \frac{ |\xi|^k}{|x-z_+(x,\xi)|^{k}} + (-1)^k \frac{ |\xi|^k}{|x-z_-(x,\xi)|^{k}} \\ &=& \frac{\big[\left<x,\xi\right>  + \sqrt{\Delta'} \big]^k + (-1)^k \big[- \left<x,\xi\right>  + \sqrt{\Delta'} \big]^k }{(1-\|x\|^2)^k}.\end{eqnarray*} 
It is easy to see that $J_k(x,\xi)$ is a polynomial in $\xi$ of degree at most $k$. Therefore, $\Delta^k_\xi \; J_k(x,\xi) =0$. From (\ref{E:Tk}), we obtain $\mT_k =0$ for all $k \geq 1$. Hence, the asymptotic formula (\ref{E:Asymp}) then finishes the proof.
\end{proof}

The above result is weaker than the identity $\mT(f) =f$, which was proved in \cite{Kun07,natterer2012photo,Halt-Inv}. However, our approach has some advantage when dealing with partial data problem, which we will consider later.

\subsection{Parabolic domain} An elliptical parabola is defined (up to translation), by the equation:
$$\mS = \Big\{z \in \rN^n: \sum_{i=1}^{n-1} \og_i^2 \; z_i^2 =z_n \Big\},$$ where $\og_i >0$ for $i=0,..,n-1$. Although it is not a closed convex surface, it almost encloses the convex domain: $$\Og = \Big\{z \in \rN^n: \sum_{i=1}^{n-1} \og_i^2 \; z_i^2 < z_n \Big\},$$ in the following sense: given $x \in \Og$, for all directions $\xi \in \rN^n$, except for the vertical ones, the line through $x$ along direction $\xi$ intersect $\mS$ at exactly two points, on the opposite sides of $x$. 

We can still define $\mT$ as the a pseudo-differential operator from $\mE'(\Og)$ to $\mD'(\Og)$ and the general framework presented above also applies. We now prove a similar result to Theorem \ref{T:SE}:

\begin{theorem} \label{T:SP} Assume that $\mS$ is an elliptical paraboloid. Then, $\mK= \mT - I$ is an infinitely smoothing operator. That is, $\mT(f)-f \in C^\infty(\Og)$ for all $f \in \mE'(\Og)$. \end{theorem}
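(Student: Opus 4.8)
The plan is to follow the same route as in the proof of \reftheo{T:SE}: by \reflemm{L:Tn}, or equivalently the asymptotic identity \mref{E:Asymp}, it suffices to show that $\mT_k \equiv 0$ for every $k \geq 1$, and by \mref{E:Tk} this reduces in turn to verifying $\Delta_\xi^k J_k(x,\xi)=0$. As in the elliptical case, the entire argument then comes down to computing the two intersection distances $|x-z_\pm(x,\xi)|$ and checking that, for each fixed $x$, the function $J_k(x,\xi)$ is a polynomial in $\xi$ of degree strictly less than $2k$; the iterated Laplacian $\Delta_\xi^k$ annihilates any such polynomial. The structural reason the paraboloid behaves like the ellipse is that substituting a line into a quadratic surface always yields a quadratic in the line parameter, so $1/t_\pm$ again has the shape (linear $\pm\sqrt{\text{quadratic}}$)/constant.

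First I would compute $z_\pm(x,\xi)=x+t_\pm\xi$ by inserting $z=x+t\xi$ into the defining equation of $\mS$. Writing the first $n-1$ coordinates with primes, this yields the quadratic $a\,t^2+b\,t+c=0$ with $a=\sum_{i=1}^{n-1}\og_i^2\xi_i^2$, $b=2\sum_{i=1}^{n-1}\og_i^2 x_i\xi_i-\xi_n$, and $c=\sum_{i=1}^{n-1}\og_i^2 x_i^2-x_n$. Since $x\in\Og$, we have $c<0$, so the two roots are real and of opposite sign, consistent with $t_+>0$ and $t_-<0$. Solving and rationalizing exactly as before gives
\begin{equation*}\frac{|\xi|}{|x-z_\pm(x,\xi)|}=\frac{1}{|t_\pm|}=\frac{\sqrt{\Delta'}\pm b}{-2c},\qquad \Delta'=b^2-4ac,\end{equation*}
where $-c=x_n-\sum_{i=1}^{n-1}\og_i^2 x_i^2>0$ is independent of $\xi$, while $\Delta'$ is homogeneous of degree $2$ in $\xi$ (indeed $b$ is homogeneous of degree $1$, $a$ of degree $2$, and $c$ is constant in $\xi$).

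With these formulas in hand, after clearing the common factor $(-2c)^k$ the numerator of $J_k$ is
\begin{equation*}\big(\sqrt{\Delta'}+b\big)^k+(-1)^k\big(\sqrt{\Delta'}-b\big)^k=\big(b+\sqrt{\Delta'}\big)^k+\big(b-\sqrt{\Delta'}\big)^k,\end{equation*}
and the binomial theorem shows that the odd powers of $\sqrt{\Delta'}$ cancel, leaving a polynomial in $b$ and $\Delta'$. Each surviving term $b^{k-2m}(\Delta')^{m}$ is homogeneous of degree $(k-2m)+2m=k$ in $\xi$; dividing by $(-2c)^k$, which does not depend on $\xi$, I conclude that $J_k(x,\xi)$ is, for each $x$, a homogeneous polynomial of degree $k$ in $\xi$. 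Since $k<2k$ for $k\geq1$, this gives $\Delta_\xi^k J_k\equiv0$, hence $\mT_k\equiv0$ for all $k\geq1$, and \mref{E:Asymp} then yields $\mK=\mT-I\in\Psi^{-\infty}(\Og)$, completing the proof.

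I expect the only genuinely delicate point to be geometric rather than algebraic. The paraboloid is unbounded and does \emph{not} enclose $\Og$, so for the vertical directions ($\xi'=0$) the coefficient $a$ vanishes, the quadratic degenerates to a linear equation, and the line through $x$ meets $\mS$ at a single point — one of $z_\pm$ is then absent. I would dispose of this by observing that these directions form the $\xi_n$-axis, a set of measure zero that is irrelevant to the symbol: the polynomial expression for $J_k$ derived above extends smoothly across it and is the actual object on which $\Delta_\xi^k$ acts, so no contribution is lost. The fact that $\mT$ is a bona fide pseudo-differential operator despite the non-compactness of $\mS$ has already been granted in the discussion preceding the statement, so the machinery of \reflemm{L:Tn} applies verbatim and the reduction at the start of the proof is legitimate.
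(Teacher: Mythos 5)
Your proposal is correct and follows essentially the same route as the paper: reduce via Lemma \ref{L:Tn} to showing $\mT_k\equiv 0$ for $k\geq 1$, solve the quadratic for $t_\pm$, rationalize to see that $|\xi|/|x-z_\pm(x,\xi)|$ has the form $(\pm b+\sqrt{\Delta'})/(-2c)$ with $c$ independent of $\xi$, and conclude that $J_k(x,\cdot)$ is a polynomial of degree at most $k$, hence killed by $\Delta_\xi^k$. Your explicit handling of the degenerate vertical directions $\xi'=0$ is a welcome addition that the paper only addresses implicitly in its setup of the parabolic domain.
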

\begin{proof}[Proof of Theorem \ref{T:SE}] From Lemma \ref{L:Tn}, it suffices to prove that $\mT_k = 0$ for all $k \in \nN$.

To simplify the writing, we introduce some notations. Let $x'=(x_1,..,x_{n-1}),\, y'=(y_1,..,y_{n-1}) \in \rN^{n-1}$, we define the inner-product $$\left<x',y'\right> = \sum_{i=1}^{n-1} \og_i^2 \; x_i \; y_i,$$
and the corresponding norm: $$\|x'\| = \sqrt{\left<x',x'\right>}. $$
Then, the equation of $\mS$ reads as:
$$\mS = \Big\{z: \|z'\|^2 =z_n\Big\},$$ where $z=(z',z_n)$. 

To analyze $$J_k(x,\xi) = \frac{ |\xi|^k}{|x-z_+(x,\xi)|^{k}} + (-1)^k \frac{ |\xi|^k}{|x-z_-(x,\xi)|^{k}},$$ we first compute the distances $|x-z_+(x,\xi)|$ and $|x-z_-(x,\xi)|$. We recall that $z_\pm(x,\xi) = x + t_\pm \xi \in \mS$, where $t_+>0$ and $t_-<0$. Therefore, $t_\pm$ are the solutions of the equation: $$\|x'+t\xi' \|^2=x_n + t \xi_n.$$
or $$ \|\xi'\|^2 \; t^2+ (2\left<x',\xi'\right> - \xi_n) \; t+ (\|x'\|^2 -x_n) = 0.$$
Since $(\|x'\|^2 -x_n) <0$, the above equation always has two solution for all $\xi' \neq 0$: \begin{eqnarray*} t_\pm = \frac{- (2\left<x',\xi'\right> - \xi_n) \pm \sqrt{\Delta}}{2 |\xi'|^2},\end{eqnarray*} where $$\Delta = (2\left<x',\xi'\right> - \xi_n)^2 - 4 \|\xi'\|^2 (x_n - \|x'\|^2)$$ is a (homogeneous) polynomial of degree $2$ in $\xi$. 

Noting that $|x - z_\pm(x,\xi)| = |t_\pm| \; |\xi|$, we obtain: \begin{eqnarray*}\frac{|\xi|}{|x-z_\pm(x,\xi)|} = \frac{1}{|t_\pm|} = \frac{\sqrt{\Delta} \pm (2\left<x',\xi'\right> - \xi_n) }{(x_n-\|x'\|^2) }.\end{eqnarray*}
We arrive to the formula \begin{eqnarray*} J_k(x,\xi) &=&  \frac{ |\xi|^k}{|x-z_+(x,\xi)|^{k}} + (-1)^k \frac{ |\xi|^k}{|x-z_-(x,\xi)|^{k}}  \\&=& \frac{\big[\sqrt{\Delta} + (2\left<x',\xi'\right> - \xi_n) \big]^k +(-1)^k \big[ \sqrt{\Delta} - (2\left<x',\xi'\right> - \xi_n) \big]^k}{(x_n -\|x'\|^2)^k}.\end{eqnarray*} It is straight forward to see that $J_k(x,.)$ is a polynomial of degree at most $k$. Therefore, $\Delta^k_\xi \; J_k(x,\xi) =0$. From (\ref{E:Tk}), we obtain $\mT_k =0$ for all $k \geq 1$. This finishes our proof.
\end{proof}
Following the proof of Theorems \ref{T:SE} and \ref{T:SP}, one can easily shows that the same result holds for any surface $\mS$ defined, up to translation and rotation, by the quadratic equation:
\begin{equation}\label{E:S}
\sum_{j=1}^m \og_j^2 \; x_j^2 = \sum_{j=m+1}^n \og_j x_j + \og_{n+1}. 
\end{equation}
for any fixed $m$ ($1 \leq m \leq n$) and $\og_i \geq 0$ such that $\og' = (\og_1,..,\og_m) \neq (0,..,0)$ and $\og'' = (\og_{m+1},..,\og_n) \neq (0,..,0)$. 

We, indeed, expect that $\mT(f)=f$ for any surface defined by (\ref{E:S}). However, we still do not have a proof for this stronger result. It will be the subject of study for an up coming paper. 

\section{Partial data problem}\label{S:Lim} 

In practical applications, one can only measure data on a proper subset of $\mS$ and in a finite time period. Therefore, the data can be modeled as $$g(z,r)=  \chi(z) \, \eta(r) \, \mR(f)(z, r).$$ Here, $\chi,\eta$ are the spatial and time cut-off functions, respectively. That is, there are bounded subsets $\Gamma_0$ and $\Gamma$ satisfying $\Gamma_0 \subset \bar{\Gamma}_0 \subset \Gamma \subset \mS$ such that \begin{eqnarray*} \chi(z) = \left\{\begin{array}{l}1,~z \in \Gamma_0, \\[6 pt] 0,~ z \in \mS \setminus \Gamma,\end{array}  \right.\end{eqnarray*} 
and there are $R>0$ and (small) $\eg>0$ such that: \begin{eqnarray*} \eta(r) = \left\{\begin{array}{l}1,~r \leq R, \\[6 pt] 0,~ r \geq R+\eg,\end{array} \right.\end{eqnarray*} 

The above conditions say that the (correct) data is only available on the domain $\Gamma_0$ and time interval $[0,R]$. 


Let us assume that $\mS$ is the boundary of a convex domain $\Og$. We now consider the modified operator $\mT_p(f)$ given by the formula:
\begin{eqnarray*} \frac{1}{2 \pi^n} \int\limits_{\mS} \int\limits_{\rN} \int\limits_{\rN^{n}} e^{i (|y-z|^2-|x-z|^2) \llg } \; \left<z-x,\nu_z\right> \; |\llg|^{n-1}\; \eta(|y-z|) \; \chi(z) \; f(y) \; dy \; d\llg \, d\sg(z).\end{eqnarray*}
Following the derivation of the identity (\ref{E:dec}) in Section \ref{S:Full}, we obtain the decomposition:
\begin{eqnarray*}\label{E:Decpartial} \mT_p(f) =  \mB \mP (g).\end{eqnarray*}
We mention that, when $\mS$ is an ellipse, the effect of $\mT_p$ is nothing but applying the inversion formula by \cite{Kun07,natterer2012photo,Halt-Inv} to partial data $g$. It is reasonable to consider $\mT_p$ as an approximate inversion formula for limited data case. We analyze the effect of this inversion procedure, from micro-local analytic point of view. 

Following the argument in the proof of Theorem \ref{T:PDC}, we obtain that the Schwartz kernel of $\mT_p$: $K_p = K_p^+ + K_p^-$, where
\begin{eqnarray*} K^+_p(x,y) =\frac{1}{2 (2 \pi)^n} \int\limits_{\rN^n}   e^{ i \left[\left<x-y,\xi \right> + |x-y|^2 \frac{|\xi| }{2 |x-z_+(x,\xi)|} \right]} \; \eta(|y-z_+(x,\xi)|) \; a_+(x,\xi) \; d\xi,\end{eqnarray*}
and \begin{eqnarray*}K_p^-(x,y) =\frac{1}{2 (2\pi)^n} \int\limits_{\rN^n}   e^{ i \left[\left<x-y,\xi \right> - |x-y|^2 \frac{|\xi| }{2 |x-z_-(x,\xi)|} \right]} \; \eta(|y-z_-(x,\xi)|) \; a_-(x,\xi) \; d\xi.\end{eqnarray*}
Here, as in the previous section, $z_\pm(x,\xi)$ are the intersections of $\mS$ with the rays $\{x+ t \xi: t> 0\}$ and $\{x+ t \xi: t<0\}$, respectively. The functions $a_\pm(x,\xi)$ are defined by $$a_\pm(x,\xi) = \chi(z_\pm(x,\xi)).$$
Using the same argument as that for Theorem \ref{T:PDC}, we obtain:
\begin{theorem} \label{L:Symbol} Let $\mS$ be the boundary of a convex domain $\Og$.
Then, $\mT_p: f \to \mB \mP (g)$ is a pseudo-differential operator whose principal symbol is: $$\sg_0(x,\xi) = \frac{1}{2}\Big[\eta(|x-z_+(x,\xi)|) \, a_+(x,\xi) + \eta(|x-z_-(x,\xi)|) \, a_-(x,\xi)\Big].$$
\end{theorem}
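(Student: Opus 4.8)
The plan is to run the same machinery used in the proof of Theorem \ref{T:PDC}, treating the two cut-offs $\eta$ and $\chi$ as an extra amplitude factor and then reading off the principal symbol by restriction to the diagonal. The kernel decomposition $K_p = K_p^+ + K_p^-$ is already in hand, and each piece is an oscillatory integral
$$K_p^\pm(x,y) = \frac{1}{2(2\pi)^n}\int_{\rN^n} e^{i \phi_\pm(x,y,\xi)}\, b_\pm(x,y,\xi)\, d\xi,$$
with the very same phase $\phi_\pm(x,y,\xi) = \left<x-y,\xi\right> \pm |x-y|^2 \frac{|\xi|}{2|x-z_\pm(x,\xi)|}$ that already appeared for $\mT_\pm$ in Theorem \ref{T:PDC}, and amplitude $b_\pm(x,y,\xi) = \eta(|y-z_\pm(x,\xi)|)\, a_\pm(x,\xi)$. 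Since the phase is unchanged, the only new work is to check that $b_\pm$ is a legitimate amplitude and to compute its value on the diagonal.

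First I would verify that $b_\pm \in S^0$. Because $\Og$ is convex and $x \in \Og$, the hitting point $z_\pm(x,\xi)$ depends only on the direction $\xi/|\xi|$, hence is homogeneous of degree $0$ in $\xi$, and is a smooth function of $(x,\xi)$ for $\xi \neq 0$ by the implicit function theorem applied to the transversal intersection of the ray with $\mS$. Consequently $a_\pm(x,\xi) = \chi(z_\pm(x,\xi))$ and $\eta(|y-z_\pm(x,\xi)|)$ are smooth and homogeneous of degree $0$ in $\xi$, and their $\xi$-derivatives of order $|\alpha|$ decay like $|\xi|^{-|\alpha|}$, which are exactly the $S^0$ estimates. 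A standard cut-off near $\xi=0$, contributing only a smoothing term, handles the mild singularity of $z_\pm$ at the origin.

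With $b_\pm \in S^0$ established, I would invoke the same reduction used for Theorem \ref{T:PDC}: since $\phi_\pm - \left<x-y,\xi\right>$ vanishes to second order on the diagonal $\{x=y\}$ and is homogeneous of degree $1$ in $\xi$, the standard theory (\cite[Theorem 3.2.1]{Soggeb}, \cite[Proposition 1.2.5]{Ho1}) shows that each $K_p^\pm$ is the kernel of a pseudo-differential operator of order $0$ whose principal symbol is obtained by restricting the amplitude to the diagonal. Setting $y=x$ gives principal symbol $\frac{1}{2}\,\eta(|x-z_\pm(x,\xi)|)\, a_\pm(x,\xi)$, the factor $\frac12$ coming from the prefactor $\frac{1}{2(2\pi)^n}$. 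Summing the two contributions yields $\sg_0(x,\xi) = \frac12[\eta(|x-z_+(x,\xi)|)\,a_+(x,\xi) + \eta(|x-z_-(x,\xi)|)\,a_-(x,\xi)]$, as claimed.

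The main obstacle is making sure that the $y$-dependence of the amplitude, introduced by the time cut-off $\eta(|y-z_\pm(x,\xi)|)$, does not spoil the reduction to the diagonal: unlike in Theorem \ref{T:PDC}, the amplitude is no longer independent of $y$, so one must confirm that the standard Kuranishi-type argument still applies and that off-diagonal $y$-derivatives generate only lower-order terms. This is routine once $b_\pm$ is known to be a classical $S^0$ amplitude that is smooth jointly in $(x,y,\xi)$, but it is the one point where the partial-data setting genuinely departs from the full-data computation, so it deserves explicit checking.
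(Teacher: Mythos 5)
Your proposal is correct and follows essentially the same route as the paper: the paper likewise obtains $K_p = K_p^+ + K_p^-$ via the change of variables $(z,\llg)\mapsto 2[z-x]\llg$ from Theorem \ref{T:PDC}, keeps the same phases, absorbs $\eta(|y-z_\pm(x,\xi)|)\,a_\pm(x,\xi)$ into the amplitude, and reads off the principal symbol on the diagonal by the standard FIO theory. Your explicit verification that $b_\pm$ is a classical $S^0$ amplitude (homogeneity of degree $0$ of $z_\pm$ in $\xi$, transversality from convexity) and your remark about the $y$-dependence of $\eta$ only make explicit what the paper leaves implicit.
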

Let us introduce the conic sets: \begin{eqnarray*} \mA_+ &=& \{(x,\xi) \in \tT^*\Og \setminus 0: z_+ \in \Gamma_0 \mbox{ and } |x-z_+(x,\xi)| \leq R\} \\  \mA_- &=& \{(x,\xi) \in \tT^*\Og \setminus 0: z_+ \in \Gamma_0 \mbox{ and } |x-z_-(x,\xi)| \leq R\}.
\end{eqnarray*}
Here, $\tT^*\Og \setminus 0 = \Og \times (\rN^n \setminus 0)$ is the cotangent bundle of $\Og$ excluding the zero section. Let $\mA = \mA_+ \cup \mA_-$, then we can write \begin{eqnarray*} \mA = (\tT^*\Og \setminus 0) \bigcap \left[\bigcup\limits_{(z,r) \in \Gamma_0 \times (0,T)} \tN^*(S_r(z)) \right],\end{eqnarray*}
where $\tN^*(S_r(z))$ is the conormal bundle of the sphere $S_r(z)$. Following \cite{palamodov00reconstruction}, we call $\mA$ the \emph{visible} (or \emph{audible}) zone. It is described in \cite{louis00local} that all the singularities of $f$ in $\mA$ are ``visible" in the data $g$. Therefore, they \emph{should be} constructed stably. 

We observe that $a_\pm(z,\xi) \geq 0$ for all $(x,\xi) \in \tT^* \Og \setminus 0$. Moreover, for any  $(x,\xi) \in \mA$, either $a_+(x,\xi) = 1$ or $a_-(x,\xi) = 1$. Therefore, $\sg_0(x,\xi) > 0$ for all $(x,\xi) \in \mA$. Lemma \ref{L:Symbol} shows that $\mT_p$ reconstructs the singularities of $f$ in $\mA$ stably. That is any singularities of $f$ at $(x,\xi) \in \mA$ produces a corresponding singularity in $\mT(f)$ at the same location and direction $(x,\xi)$ (however, the resulted singularity may be different from the original one in terms of magnitude). The operator $\mT_p$ was, indeed, used in \cite{xu04reconstructions} to reconstruct the singularities in the visible zone $\mA$. Lemma \ref{L:Symbol}, thus, provides a rigorous justification for their method. 

If $(x,\xi) \in \mA_- \cap \mA_+$, then $\sg_0(x,\xi) =1$. Therefore, the main part (top order) of singularities of $f$ at any $(x,\xi) \in \mA_+ \cap \mA_-$ can be reconstructed exactly. Moreover, if the geometry of $\mS$ is special, we can prove more:

\begin{theorem} \label{T:Epart} Let $\mS$ be an ellipse or an elliptical parabola and $\sg(x,\xi)$ be the full symbol of $\mT_p$. Then $\sg(x,\xi) =1$ for all $(x,\xi) \in \mA_+ \cap \mA_-$.  
\end{theorem}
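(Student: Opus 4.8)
The plan is to reduce the claim to the full-data symbol computation already carried out in the proofs of \reftheo{T:SE} and \reftheo{T:SP}. Recall that the full symbol of a pseudo-differential operator with amplitude $a(x,y,\xi)$ is determined, modulo $\Psi^{-\infty}(\Og)$, by the germ of $a$ along the diagonal, through the standard amplitude-to-symbol reduction
\[ \sg(x,\xi) \sim \sum_{\alpha} \frac{(-i)^{|\alpha|}}{\alpha!}\, \partial_\xi^\alpha \partial_y^\alpha \, a(x,y,\xi)\big|_{y=x}. \]
In particular $\sg(x_0,\xi_0)$ depends only on the values of $a$ in an arbitrarily small neighborhood of $(x_0,x_0,\xi_0)$. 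Comparing the kernels $K_p^\pm$ of $\mT_p$ from \reftheo{L:Symbol} with the full-data kernels $K_\pm$ of \reflemm{L:Tn}, the only difference is the cut-off factor $\eta(|y-z_\pm(x,\xi)|)\,a_\pm(x,\xi)$ in the amplitude. Hence it suffices to show that on $\mA_+\cap\mA_-$ these factors equal $1$ throughout a neighborhood of the diagonal, for then $K_p^\pm$ and $K_\pm$ carry the same symbol at the point in question; equivalently, one may simply repeat the Taylor-expansion argument of \reflemm{L:Tn} while carrying the extra cut-offs.

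First I would fix $(x_0,\xi_0)$ in the interior of $\mA_+\cap\mA_-$, that is, with $z_\pm(x_0,\xi_0)\in\Gamma_0$ and $|x_0-z_\pm(x_0,\xi_0)|<R$ (here I read the definition of $\mA_-$ with its evident requirement $z_-\in\Gamma_0$). Because $\mS$ is convex and $x_0\in\Og$, each intersection map $z_\pm(x,\xi)$ is smooth near $(x_0,\xi_0)$. Since $\Gamma_0$ is open and $\eta\equiv 1$ on a neighborhood of the interval $[0,\,|x_0-z_\pm(x_0,\xi_0)|]$, continuity yields $\chi(z_\pm(x_0,\xi))=1$ and $\eta(|y-z_\pm(x_0,\xi)|)=1$ for all $(y,\xi)$ in a neighborhood of $(x_0,\xi_0)$. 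Thus the partial-data amplitude coincides with the full-data amplitude near $(x_0,x_0,\xi_0)$, so that all their $y$- and $\xi$-derivatives on the diagonal agree, and the reduction formula gives $\sg(x_0,\xi_0)=\sg_\mT(x_0,\xi_0)$, where $\sg_\mT$ is the full symbol of the full-data operator $\mT$.

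The argument is then completed by the ellipse/parabola computation itself: in \reftheo{T:SE} and \reftheo{T:SP} it was shown that each $J_k(x,\xi)$ is a polynomial in $\xi$ of degree at most $k$, whence $\Delta_\xi^k J_k\equiv 0$ and $\mT_k=0$ for every $k\geq 1$; by the asymptotic expansion \mref{E:Asymp} this forces $\sg_\mT\equiv 1$. Therefore $\sg(x_0,\xi_0)=1$ at every interior point of $\mA_+\cap\mA_-$.

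The one genuinely delicate point is the boundary of the visible zone, where $|x_0-z_\pm(x_0,\xi_0)|=R$ exactly. There $\eta$ need not be locally constant (it equals $1$ only on $[0,R]$ and may decay for $r>R$), so the germ comparison above breaks down. The hard part of the write-up is thus the passage to this boundary: since $\mT_p$ is a genuine pseudo-differential operator by \reftheo{L:Symbol}, its full symbol $\sg$ is smooth on $\tT^*\Og\setminus 0$, and I would conclude $\sg=1$ on all of $\mA_+\cap\mA_-$ by continuity, using that $\mA_+\cap\mA_-$ is the closure of its interior. One should also record that the standard reduction remains valid despite the $\xi$-dependence of $z_\pm$ inside the cut-offs; this is routine here precisely because, on the relevant neighborhood, the cut-offs are exactly constant and the phase is identical to the one in \reflemm{L:Tn}.
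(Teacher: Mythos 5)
Your proposal is correct and follows essentially the same route as the paper: both reduce to the full-data computation by observing that the cut-offs $a_\pm$ and $\eta$ are identically $1$ near the diagonal over $\mA_+\cap\mA_-$, so that the amplitude in the asymptotic expansion coincides with $J_k(x,\xi)$, which is a polynomial of degree at most $k$ killed by $\Delta_\xi^k$. Your extra care at the boundary stratum $|x-z_\pm(x,\xi)|=R$ (where $\eta$ need not be locally constant, forcing the continuity argument) and your insistence that the cut-offs equal $1$ on a full conic neighborhood rather than merely at the point are refinements of details the paper's proof passes over silently.
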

\begin{proof}
Similar to Lemma \ref{L:Tn}, we obtain the asymptotics expansion:
\begin{equation} \label{E:Asymp} \mT_p \equiv \mT_{0,p} + \sum_{k=1}^\infty \frac{(-i)^k}{2^k k!}  \mT_{k,p}\quad \mbox{ (up to infinitely smoothing factor)},\end{equation}
Here,
\begin{equation*} \mT_{k,p} (f)(x) = \frac{1}{2(2\pi)^n} \int\limits_{\rN^n} \int\limits_{\rN^n} e^{i \left<x-y,\xi \right>} \; \Delta_\xi^k \big[J_{k,p}(x,y,\xi)\big]\; f(y) \; dy \; d \xi,\quad x \in \Og,\end{equation*} where
\begin{eqnarray*} J_{k,p} (x,y,\xi) &=& \eta(|y-z_+(x,\xi)|) \; a_+(x,\xi) \; \frac{ |\xi|^k}{|x-z_+(x,\xi)|^{k}} \\ &+& (-1)^k \; \eta(|y-z_-(x,\xi)|) \; a_-(x,\xi) \; \frac{ |\xi|^k}{|x-z_-(x,\xi)|^{k}}.\end{eqnarray*}
For $(x,\xi) \in \mA_+ \cap \mA_-$ and $y$ close to $x$, we have $a_\pm(x,\xi) = 1$ and $\eta(|y-z_\pm(x,\xi)|)=1$; hence, $J_{k,p}(x,y,\xi) = J_k(x,\xi)$. As proven in Theorems \ref{T:SE} and \ref{T:SP}, $J_k(x,\xi)$ is a polynomial in $\xi$ of degree at most $k$. Therefore, for all $k \geq 1$, the symbol of $\mT_k$ is zero in $\mA_+ \cap \mA_-$. Moreover, the symbol of $\mT_0$ is equal to $1$ in $\mA_+ \cap \mA_-$. Therefore, the symbol of $\mT$ is equal to $1$ in $\mA_+ \cap \mA_-$.
\end{proof}
As a consequence of the above result, we obtain $\mT_p(f) - f \in C^\infty(\Og)$ for any $f \in \mE'(\Og)$ satisfying $\wf(f) \subset \mA_+ \cap \mA_-$. Therefore, $\mT_p(f)$ reconstructs all the singularities of $f$ in $\mA_+ \cap \mA_-$ without any distortions. We notice that $(x,\xi) \in \mA_+$ means the singularity at $(x,\xi)$ is observed in the direction $\xi$; and $(x,\xi) \in \mA_-$ means the singularity at $(x,\xi)$ is observe in the opposite direction $-\xi$. We, hence, conclude that: if the singularity at $(x,\xi)$ is observed at both directions $\pm \xi$, it is reconstructed perfectly.  This is an interesting property that only exists for some special geometries of $\mS$ (in the above theorem, we prove for elliptical and parabolic observation surface $\mS$). It would be interesting to find all such geometries. 

\section{Hyperplane}\label{S:Plane}
We now consider the case $\mS$ is a hyperplane. Without loss of generality, we assume that $$\mS=\{x \in \rN^n: x_n=0\}.$$ 
Let us denote $$\Og=\{x \in \rN^n: x_n>0\}.$$
Of course, $\mS$ does not encloses (or "almost enclose") $\Og$. Therefore, the general framework in Sections \ref{S:Full} and \ref{S:Lim} does not directly apply. However, $\mS$ has the following property: given $x$ in $\Og$, for most directions $\xi$, except for the horizontal ones, the line $\ell(x,\xi)$, which pass through $x$ along direction $\xi$, intersects $\mS$ at exactly one point. This, in some sense, means that $\mS$ "half-encloses" $\Og$. We, hence, modify the formula of $\mT$ by multiplying it by $2$. That is, 
\begin{eqnarray*} \mT(f)(x) = \frac{1}{\pi^n} \int\limits_{\mS} \int\limits_{\rN} \int\limits_{\rN^{n}} e^{i (|y-z|^2-|x-z|^2) \llg } \; \left<z-x,\nu_z\right> \; |\llg|^{n-1} \; f(y) \;dy \; d\llg \; d\sg(z).\end{eqnarray*}
Repeating the arguments in Section \ref{S:Full}, we obtain the following decomposition of $\mT$:
 \begin{eqnarray}\label{E:Dec2} \mT(f)(x) = \frac{x_n}{\pi^n} \; \mR^* \mP \mR(f)(x),\end{eqnarray} where $\mR^*$ is the formal adjoint of $\mR$. More explcitly, $\mT(f)$ can be written down in terms of spherical Radon transform $\mR$ as follows:
\begin{eqnarray*}  \mT(f)(x) &=& \left\{ \begin{array}{l} \frac{(-1)^{\frac{n-1}{2}}x_n}{\pi^{n-1}} \int\limits_{\mS} D^{n-1} \left[r^{-1}\mR(f) (z,r) \right] \big|_{r = |x-z|} \; d\sg(z), \mbox{ if n is odd}, \\ [6 pt]\frac{2(-1)^{\frac{n-2}{2}}x_n }{\pi^n}  \int\limits_{\mS} \int\limits_{\rN_+} \frac{D^{n-1}\left[\tau^{-1}\mR(f)(z,\tau)\right]}{|x-z|^2-\tau^2}\; \tau \; d\tau \; d\sg(z), \mbox{ if n is even}. \end{array} \right.\end{eqnarray*}

It is well known that $\mT(f) =f$ (see, e.g., \cite{BuKar,NaRa,XW05,Beltukov}). That is the above formula is an exact inversion of $\mR$. However, in practical applications such as SONAR (see, e.g. \cite{QuintoSONAR}), the partial data problem is of more importance.  We assume that the data is now modeled as $g(z,r) = \chi(z) \; \eta(r) \; \mR(f)(z,r)$, where $\eta$ and $\chi$ are defined as in Section \ref{S:Lim}. We then consider the corresponding operator $\mT_p(f)(x)$, which is given by:
\begin{eqnarray*} \frac{1}{\pi^n} \int\limits_{\mS} \int\limits_{\rN} \int\limits_{\rN^{n}} e^{i (|y-z|^2-|x-z|^2) \llg } \left<z-x,\nu_z\right> |\llg|^{n-1} \; \chi(z) \; \eta(|y-z|) \; f(y) \; dy \; d\llg \; d\sg(z).\end{eqnarray*}
We obtain:
\begin{eqnarray*} \mT_p(f)(x) =\frac{ x_n}{\pi^n} (\mR^* \mP (g)) (x).\end{eqnarray*}
This is just the application of the above inversion formula to the partial data $g$. 

We now analyze the micro-local properties of $\mT_p$. For any $(x,\xi) \in \tT^*\Og \setminus 0 \equiv \rN^n_+\times (\rN^n \setminus 0)$ such that $\xi$ is not parallel to the plane $\mS$, $\ell(x,\xi)$ intersects with $\mS$ at exactly one point $z(x,\xi)$. Let us define $$a(x,\xi) = \chi(z(x,\xi)).$$ The function $a(x,\xi)$ can be extended smoothly to $\tT^* \rN^n_+$ by zero.
\begin{theorem} \label{T:PrincipalPlane} The operator $\mT_p: f \to \frac{x_n}{\pi^n} \mP^*\mR(g)$ is a pseudo-differential operator whose principal symbol is $$\sg_0(x,\xi)= a(x,\xi) \; \eta(|x-z(x,\xi)|).$$
\end{theorem}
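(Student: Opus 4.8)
The plan is to compute the Schwartz kernel of $\mT_p$ exactly as in the proof of Theorem~\ref{T:PDC}, the one decisive difference being that, for the plane, each line through a point $x\in\Og$ meets $\mS$ in a \emph{single} point rather than two. First I would substitute the phase identity \mref{E:Phase} into the defining oscillatory integral of $\mT_p$; reading off the kernel and splitting the $\llg$-integral into its positive and negative parts gives $K_p=K_p^{(+)}+K_p^{(-)}$, where each $K_p^{(\pm)}$ carries the factor $\langle z-x,\nu_z\rangle\,|\llg|^{n-1}\,\chi(z)\,\eta(|y-z|)$ integrated over $\mS\times\rN_\pm$.

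Next I would perform, in each piece, the change of variables $(z,\llg)\mapsto\xi=2(z-x)\llg$. On the plane $\nu_z\equiv(0,\dots,0,-1)$, so $\langle z-x,\nu_z\rangle=x_n$ is constant, and the Jacobian computation gives $d\xi=2^n|\llg|^{n-1}x_n\,d\llg\,d\sg(z)$, turning the prefactor $\tfrac{1}{\pi^n}\langle z-x,\nu_z\rangle|\llg|^{n-1}\,d\llg\,d\sg(z)$ into $\tfrac{1}{(2\pi)^n}\,d\xi$. Here is the point that separates the plane from the convex case of Theorem~\ref{L:Symbol}: solving $x_n+\xi_n/(2\llg)=0$ shows that $\llg>0$ forces $\xi_n<0$ and $\llg<0$ forces $\xi_n>0$, so the two pieces sweep out the complementary open half-spaces $\{\xi_n<0\}$ and $\{\xi_n>0\}$ \emph{once each} and glue into a single integral over $\rN^n$. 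In both pieces the forward, resp.\ backward, intersection coincides with the unique intersection $z(x,\xi)$, and the quadratic phase term becomes $\pm|x-y|^2\,|\xi|/(2|x-z(x,\xi)|)$, with the $+$ sign on $\{\xi_n<0\}$ and the $-$ sign on $\{\xi_n>0\}$. Thus
\begin{equation*}
K_p(x,y)=\frac{1}{(2\pi)^n}\int_{\rN^n} e^{i[\langle x-y,\xi\rangle+\psi(x,y,\xi)]}\,\chi(z(x,\xi))\,\eta(|y-z(x,\xi)|)\,d\xi,
\end{equation*}
with $\psi$ vanishing to second order on the diagonal $y=x$. Note the constant is $\tfrac{1}{(2\pi)^n}$, twice that of Theorem~\ref{L:Symbol}, precisely because the single-point geometry removes the duplication that produced the factor $\tfrac12$ there.

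The step I expect to require the most care is the behaviour near the horizontal directions $\{\xi_n=0\}$, where the single intersection $z(x,\xi)$ runs off to infinity and the phase term $\psi$ is a priori only piecewise defined across $\xi_n=0$. The resolution is that $z(x,\xi)=(x'-\tfrac{x_n}{\xi_n}\xi',0)$ leaves every bounded set as $\xi$ approaches a horizontal direction; since $\chi$ is compactly supported, the amplitude $\chi(z(x,\xi))$ vanishes in a full conic neighbourhood of $\{\xi_n=0\}$. Consequently the amplitude is a genuine symbol of class $S^0$ and, together with $\psi$, is smooth on its support, so that no contribution survives from the degenerate directions and the two halves truly assemble into one well-defined oscillatory integral. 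With this in hand the kernel has the standard pseudodifferential form---phase $\langle x-y,\xi\rangle$ plus a term vanishing to second order on the diagonal, amplitude in $S^0$---and the argument invoked in the proof of Theorem~\ref{T:PDC} (e.g.\ \cite[Theorem 3.2.1]{Soggeb}) shows $\mT_p\in\Psi^0(\Og)$. Its principal symbol is obtained by restricting the amplitude to $y=x$, where $\psi=0$, which yields $\sg_0(x,\xi)=\chi(z(x,\xi))\,\eta(|x-z(x,\xi)|)=a(x,\xi)\,\eta(|x-z(x,\xi)|)$, as claimed.
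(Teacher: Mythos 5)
Your proof is correct and follows essentially the same route as the paper: the change of variables $(z,\llg)\mapsto \xi=2(z-x)\llg$ with Jacobian $2^n x_n|\llg|^{n-1}$, the resulting kernel $\frac{1}{(2\pi)^n}\int e^{i\langle x-y,\xi\rangle+i\psi}a(x,\xi)\eta(|y-z(x,\xi)|)\,d\xi$, and the standard FIO theorem to read off the principal symbol on the diagonal. The only difference is cosmetic: the paper notes directly that $\llg=-\xi_n/(2x_n)$, so your piecewise phase $\pm|x-y|^2|\xi|/(2|x-z(x,\xi)|)$ is in fact the single smooth function $-|x-y|^2\xi_n/(2x_n)$ across $\{\xi_n=0\}$, which makes your (valid) workaround via the compact support of $\chi$ unnecessary.
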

\begin{proof} 
We can rewrite the Schwartz kernel of $\mT_p$ as: \begin{eqnarray*} K_p(x,y) = \frac{1}{\pi^n} \int\limits_{\mS}  \int\limits_{\rN} e^{i \left[\left<x-y,2 [z-x] \llg \right>+ |x-y|^2 \llg \right]}\; \left<z-x,\nu_z\right> \; |\llg|^{n-1} \; \eta(|y-z|) \; \chi(z) \; d\llg \; d\sg(z) .\end{eqnarray*}
Let us make the following change of variables $$(z,\llg) \in \mS \times \rN \longrightarrow \xi(z,\llg) = 2[z-x] \llg \in \rN^n.$$ Straight forward calculations show that the Jacobian of the change is $2^n \left<z-x,\nu_z\right> |\llg|^{n-1}$.
Noting that $\llg = -\frac{\xi_n}{2x_n}$, we arrive to 
\begin{eqnarray}\label{E:Schp} K_p(x,y) = \frac{1}{(2\pi)^n}\int\limits_{\rN^n} e^{i \left<x-y,\xi \right>} \; e^{-i|x-y|^2 \frac{\xi_n}{2 x_n}} \; a(x,\xi) \; \eta(|y-z(x,\xi)|) \; d\xi.\end{eqnarray}
Standard theory of FIO (see, e.g.,  \cite[Theorem 3.2.1]{Soggeb}) concludes the proof of the theorem.
\end{proof}
As a corollary, $\mT_p$ extends continuously to $\mE'(\Og) \to \mD'(\Og)$. Moreover, its principal symbol $\sg_0(x,\xi)$ is equal to $1$ in the visible zone:
\begin{eqnarray*} \mA &=& \{(x,\xi) \in \tT^*\Og \setminus 0 : z(x,\xi) \in\Gamma_0, \mbox{ and } |x-z(x,\xi)|< R\}\\ &=& (\tT^*\Og \setminus 0) \bigcap \left(\bigcup_{(z,r) \in \Gamma_0 \times (0,R)} \tN^* (S_r(z)) \right).\end{eqnarray*}
Therefore, $\mT_p(f) = \frac{x_n}{\pi^n}\mR^*\, \mP (g)$ reconstructs ALL the visible singularities of $f$ without distorting the top order of singularities. For example, if the function $f$ has a jump singularity at $(x,\xi) \in \mA$ then $\mT_p(f) =\frac{x_n}{\pi^n} \, \mR^* \, \mP (g)$ also has a jump with the same magnitude at the same location. We actually can prove even a stronger result:
\begin{theorem}\label{T:Planar}
Let $f \in \mE'(\rN^n_+)$ such that $\wf(f) \subset \mA$, then $$\mT_p(f)(x) - f(x) \in C^\infty(\Og).$$
\end{theorem}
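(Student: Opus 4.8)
The plan is to upgrade \reftheo{T:PrincipalPlane} from the principal symbol to the \emph{full} symbol of $\mT_p$ and then deduce the wave front statement by pseudolocality. Since $\mT_p \in \Psi^0(\Og)$ and $I \in \Psi^0(\Og)$, the operator $Q := \mT_p - I$ is a pseudo-differential operator of order $0$; it suffices to show that its full symbol lies in $S^{-\infty}$ on a conic neighborhood of every point of the visible zone $\mA$. This is the planar analogue of \reftheo{T:Epart}, and I would carry it out by the scheme of \reflemm{L:Tn}.

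First I would produce an asymptotic expansion of $\mT_p$. Starting from the kernel \mref{E:Schp}, Taylor-expand the auxiliary phase factor
$$ e^{-i|x-y|^2 \frac{\xi_n}{2x_n}} = \sum_{k=0}^N \frac{(-i)^k}{2^k k!}\, |x-y|^{2k}\, \frac{\xi_n^k}{x_n^k} + (\mbox{remainder}), $$
replace each $|x-y|^{2k} e^{i\langle x-y,\xi\rangle}$ by $(-1)^k \Delta_\xi^k e^{i\langle x-y,\xi\rangle}$, and integrate by parts in $\xi$. The remainder is handled verbatim as the operators $A,B$ in \reflemm{L:Tn} and lies in $\Psi^{-(N+1)}(\Og)$. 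Letting $N\to\infty$ one obtains, up to an infinitely smoothing operator,
$$ \mT_p \equiv \mT_{0,p} + \sum_{k\ge 1}\frac{i^k}{2^k k!}\,\mT_{k,p}, $$
where $\mT_{0,p}$ has amplitude $a(x,\xi)\,\eta(|y-z(x,\xi)|)$ and $\mT_{k,p}$ has amplitude $\Delta_\xi^k\big[\tfrac{\xi_n^k}{x_n^k}\, a(x,\xi)\, \eta(|y-z(x,\xi)|)\big]$ (the precise constants are irrelevant to what follows).

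The decisive observation is that over $\mA$ the $y$-dependent amplitude collapses to a polynomial in $\xi$. Fix $(x_0,\xi_0)\in\mA$. Then $z(x_0,\xi_0)\in\Gamma_0$ forces $a(x_0,\xi_0)=\chi(z(x_0,\xi_0))=1$, and $|x_0-z(x_0,\xi_0)|<R$; by continuity of $z(x,\xi)$ there is an open conic neighborhood of $(x_0,\xi_0)$ and a neighborhood of the diagonal $y=x$ on which $a(x,\xi)\,\eta(|y-z(x,\xi)|)\equiv 1$. Because the symbol of a $\Psi$DO is determined near $(x_0,\xi_0)$ by the amplitude in such a neighborhood (the amplitude-to-symbol reduction involves only $\xi$- and $y$-derivatives evaluated at $y=x$), the $y$-dependence contributes nothing there and the amplitude of $\mT_{k,p}$ reduces to $\Delta_\xi^k\big[\tfrac{\xi_n^k}{x_n^k}\big]$. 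For fixed $x$, $\tfrac{\xi_n^k}{x_n^k}$ is a polynomial in $\xi$ of degree $k$, and $\Delta_\xi^k$ annihilates every polynomial of degree $<2k$; hence $\Delta_\xi^k\big[\tfrac{\xi_n^k}{x_n^k}\big]=0$ for all $k\ge 1$, exactly as in \reftheo{T:SE} and \reftheo{T:SP}. Meanwhile $\mT_{0,p}$ has symbol $1$ on $\mA$. Therefore the full symbol of $\mT_p$ differs from $1$ by an element of $S^{-\infty}$ on a conic neighborhood of $\mA$, i.e. $Q=\mT_p-I$ is microlocally smoothing over $\mA$.

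Finally I would translate this into the wave front statement. Given $(x_0,\xi_0)\in\mA$, choose a zeroth-order pseudo-differential operator $\Lambda$ elliptic at $(x_0,\xi_0)$ with symbol supported in the conic neighborhood above; then $\Lambda Q\in\Psi^{-\infty}(\Og)$, so $\Lambda Q f\in C^\infty$, and microlocal elliptic regularity gives $(x_0,\xi_0)\notin\wf(Qf)$. As $(x_0,\xi_0)\in\mA$ was arbitrary, $\wf(Qf)\cap\mA=\emptyset$ for every $f$. On the other hand, pseudolocality of $Q$ yields $\wf(Qf)\subseteq\wf(f)\subseteq\mA$. The two inclusions force $\wf(Qf)=\emptyset$, so $\mT_p(f)-f = Qf \in C^\infty(\Og)$, which is the claim. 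I expect the only delicate point to be the key vanishing step: one must ensure that $a(x,\xi)\,\eta(|y-z(x,\xi)|)$ is identically $1$ on a \emph{full} open conic neighborhood together with a diagonal neighborhood, not merely at the single covector $(x_0,\xi_0)$ with $y=x$, since the vanishing of $\Delta_\xi^k$ requires differentiating the amplitude in $\xi$. This is guaranteed by the openness of $\mA$ (the defining conditions $z(x,\xi)\in\Gamma_0$ and $|x-z(x,\xi)|<R$ are open) and by the hypothesis that $\chi\equiv 1$ throughout $\Gamma_0$.
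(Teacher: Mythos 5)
Your proposal is correct and follows essentially the same route as the paper: Taylor expansion of the factor $e^{-i|x-y|^2\xi_n/(2x_n)}$ in the kernel \mref{E:Schp}, integration by parts in $\xi$ as in \reflemm{L:Tn}, and the observation that over $\mA$ the amplitude reduces to $\Delta_\xi^k[\xi_n^k]$, which vanishes for $k\ge 1$, so the full symbol is $1$ on the visible zone. Your final paragraph merely spells out more explicitly (via microlocal elliptic regularity and pseudolocality) the concluding step that the paper states without elaboration, and your remark on the openness of $\mA$ is a correct and harmless refinement.
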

\begin{proof}
From (\ref{E:Schp}):
\begin{eqnarray*} \mT_p(f)(x) = \frac{1}{(2\pi)^n}\int\limits_{\rN^n}\int\limits_{\rN^n} e^{i \left<x-y,\xi \right>} e^{-i|x-y|^2 \frac{\xi_n}{2 x_n}} \; a(x,\xi)  \; \eta(|y-z(x,\xi)|) \; f(y) \; dy \; d\xi.\end{eqnarray*}
Using the Taylor's expansion for $e^{-i|x-y|^2 \frac{\xi_n}{2 x_n}}$ and arguing as in Lemma \ref{L:Tn}, we deduce the asymptotic expansion of $\mT_p$:
\begin{equation}\label{E:asym} \mT_p -\sum_{k=0}^\infty \frac{i^k}{(2|x_n|)^k k!}  \mT_{k,p} \in \Psi^{-\infty}(\Og).\end{equation}
Here, \begin{eqnarray*} \mT_{k,p}(f)(x) &=&\frac{(-1)^k}{(2 \pi)^n} \int\limits_{\rN^n} \int\limits_{\rN^n} |x-y|^{2k}\; e^{i \left<x-y,\xi \right>}   \; \xi_n^k \; a(x,\xi) \; \eta(|y-z(x,\xi)|) \; f(y) \; dy \; d\xi\\ &=&  \frac{1}{(2 \pi)^n} \int\limits_{\rN^n} \int\limits_{\rN^n} \Delta^k_\xi \; \left[e^{i \left<x-y,\xi \right>}\right] \; \eta(|y-z(x,\xi)|) \; a(x,\xi) \; \xi_n^k  \; f(y) \; dy \; d\xi.\end{eqnarray*}

Taking integration by parts with respect to $\xi$, we obtain:
\begin{eqnarray*} \mT_{k,p} (f)(x) = \frac{1}{(2\pi)^n} \int\limits_{\rN^n}\int\limits_{\rN^n} e^{i \left<x-y,\xi \right>} \; \Delta^k_\xi \left[a(x,\xi) \; \eta(|y-z(x,\xi)|) \; \xi_n^k  \right]  \; f(y) \; dy \; d\xi.\end{eqnarray*}
We notice that if $(x,\xi) \in \mA$ and $y$ is close to $x$, then $a(x,\xi) = 1$ and $\eta(|y-z(x,\xi)|) =1$. Hence, for $(x,\xi) \in \mA$ the full symbol if $\mT_k$ is 
\begin{eqnarray*}\sg^k(x,\xi) = \Delta_\xi^k \; \xi_n^k = \left\{\begin{array}{l} 1,~k=0,\\[6 pt] 0,~k \geq 1.\end{array} \right. \end{eqnarray*} From the asymptotic expansion (\ref{E:asym}), we obtain the full symbol $\sg(x,\xi)$ of $\mT$ satisfies $\sg(x,\xi)=1$ for all $(x,\xi) \in \mA$. Therefore, $\mT(f) - f \in \mD^\infty(\Og)$ if $\wf(f) \in \mA$.
\end{proof}

The above theorem shows that $\frac{x_n}{\pi^n}\mR^*\, \mP(g)$ reconstructs {\bf ALL} the ``visible'' singularities of $f$ up to infinite order (i.e, perfectly without any distortion). This is quite a unique property of planar observation surface $\mS$. It suggests that the planar observation surface is probably the best in terms of reconstructing ``visible'' singularities. 

\section*{Acknowledgment}
The author is thankful to Professor P. Stefanov for his critical comments to the preliminary version of the article.


\def\dbar{\leavevmode\hbox to 0pt{\hskip.2ex \accent"16\hss}d}

\end{document}